  \pgfplotsset{compat=newest}
  \newlength\figureheight
  \newlength\figurewidth
\newcommand{\mc}[1]{\mathcal{#1}}
\newcommand{\mb}[1]{\mathbb{#1}}
\newcommand{\bmat}[1]{\begin{bmatrix} #1 \end{bmatrix}}
\newcommand{\mat}[1]{\mathbf{#1}}
\newcommand{\ten}[1]{\boldsymbol{\mathcal{#1}}}
\newcommand{\tenh}[1]{\widehat{\boldsymbol{\mc{#1}}}}
\newcommand{\rank}{\mathsf{rank}}
\newcommand{\range}{\mathcal{R}}
\newtheorem{remark}{Remark}
\title{Randomized Algorithms for Low-rank Tensor Decompositions in the Tucker Format\thanks{\funding{The authors would like to  acknowledge the support of the National Science Foundation through the grants DMS-1821148 (M.E.K) and DMS-1821149 (R.M. and A.K.S).}}}
\author{Rachel Minster\thanks{Department of Mathematics, North Carolina State University, \email{rlminste@ncsu.edu}} \and Arvind K. Saibaba\thanks{Department of Mathematics, North Carolina State University, \email{asaibab@ncsu.edu}} \and Misha E. Kilmer\thanks{Department of Mathematics, Tufts University, \email{misha.kilmer@tufts.edu}}}
\begin{document}
\maketitle

\begin{abstract}
Many applications in data science and scientific computing involve large-scale datasets that are expensive to store and compute with, but can be efficiently compressed and stored in an appropriate tensor format.  In recent years, randomized matrix methods have been used to efficiently and accurately compute low-rank matrix decompositions. Motivated by this success, we focus on developing randomized algorithms for tensor decompositions in the Tucker representation. Specifically, we present randomized versions of two well-known compression algorithms, namely, HOSVD and STHOSVD. We present a detailed probabilistic analysis of the error of the randomized tensor algorithms. We also develop variants of these algorithms that tackle specific challenges posed by large-scale datasets. The first variant adaptively finds a low-rank representation satisfying a given tolerance and it is beneficial when the target-rank is not known in advance. The second variant preserves the structure of the original tensor, and is beneficial for large sparse tensors that are difficult to load in memory. We consider several different datasets for our numerical experiments: synthetic test tensors and realistic applications such as the compression of facial image samples in the Olivetti database and word counts in the Enron email dataset.
\end{abstract}

\section{Introduction}
Tensors, or multi-way arrays, appear in a wide range of applications such as signal processing; neuroscientific applications such as Electroencephalography; data mining; seismic data processing; machine learning applications such as facial recognition, handwriting digit classification, and latent semantic indexing; imaging; astronomy; and uncertainty quantification.  For example, a database of gray scale images constitutes a third order array when each image is stored as a matrix, while a numerical simulation of system of a partial differential equations (PDEs) in three-dimensional space when tracking several parameters over time yields a five-dimensional dataset. Often, these datasets are treated as matrices rather than as tensors, suggesting that additional structure that could be leveraged for gaining insight and lowering computational cost is often underutilized and undiscovered. 

A key step in processing and studying these datasets involves a compression step either to find an economical representation in memory, or to find principal directions of variability. While working with tensors there are many possible formats one may consider, and each format is equipped with a different notion of compression and rank. Examples of tensor formats include CANDECOMP/PARAFAC (CP), Tucker, Hierarchical Tucker, and Tensor Train, all of which have their respective benefits (see surveys~\cite{kolda2009tensor,grasedyck2013literature,cichocki2016tensor,cichocki2017tensor}). The CP format which represents a tensor as  a sum of rank$-1$ outer products gives a compact and unique (under certain conditions) representation. Tucker generally finds a better fit for data by estimating the subspace of each mode, while Hierarchical-Tucker and Tensor Train are useful for very high-dimensional tensors.  In this paper, we focus on the Tucker representation which is known for its favorable compression properties in a modest number of dimensions (3-7 modes). Given a multilinear rank $\mat{r}$, the Tucker form finds a rank $\mat{r}$ representation of a tensor as a product of a core tensor and factor matrices typically having orthonormal columns.  Popular algorithms for compression in the Tucker format can be found in \cite{de2000multilinear,vannieuwenhoven2012new,de2000best}, and a survey of approximation techniques can be found in \cite{grasedyck2013literature}. Also, depending on how small the target rank for an approximation is compared to the original dimensions, high compression can be achieved. If the data is such that this is not possible, representing it in the Tucker format can still give insight into its principal directions, since Tucker is also a form of higher-dimensional principal component analysis (PCA).

In recent years, randomized matrix algorithms have gained popularity for developing low-rank matrix approximations (see the review~\cite{halko2011finding,mahoney2011randomized,drineas2016randnla}). These algorithms are easy to implement, computationally efficient for a wide range of matrices (e.g. sparse matrices, matrices that can be accessed only via matrix-vector products, and dense matrices that are difficult to load in memory), and have accuracy comparable  with non-randomized algorithms. There is also well-developed error analysis applicable to several classes of random matrices for randomized algorithms. Even more recently, randomized algorithms have been developed for tensor decompositions (see below for a detailed review).  In this paper, we make several contributions by proposing new algorithms that are accompanied by  rigorous error analysis. The results in this paper enable the efficient computation of low-rank Tucker decompositions across a wide range of applications.  The randomized algorithms developed here exploit the structure  of tensor decompositions, while the analysis provides insight into the choice of parameters that control the tradeoff between accuracy and computational efficiency.

\paragraph{Contributions and Contents} In \cref{sec:background}, we first review  the necessary background information on tensors and randomized algorithms. Then, in \cref{sec:rand_algs}, we present analyses of randomized versions of HOSVD and STHOSVD (proposed in \cite{zhou2014decomposition} and \cite{che2018randomized} respectively). Our contributions on this front include the probabilistic analysis of the randomized versions of these algorithms in expectation, as well as analysis of the associated computational costs. In \cref{sec:adapt} we present adaptive randomized algorithms to compute low-rank tensor decompositions to be used in applications where the target rank is not known beforehand.  In \cref{sec:sparse}, we present a new randomized compression algorithm for large tensors, which produces a low-rank decomposition whose core tensor has entries taken directly from the tensor of interest. In this sense, the core tensor preserves the structure (e.g., sparsity, non-negativity) of the original tensor. For sparse tensors, our algorithm has  the added benefit that the intermediate and final decompositions can be stored efficiently, thus enabling the computation of low-rank tensor decompositions of large, sparse tensors. To supplement the algorithm, we provide a probabilistic analysis in expectation.  Finally, in \cref{sec:num_results},  we test the performance of all algorithms on several synthetic tensors and real-world datasets, and discuss the performance of the proposed bounds.

\paragraph{Related Work}
Several randomized algorithms have been proposed for computing low-rank tensor decompositions, e.g., Tucker format \cite{che2018randomized,zhang2016randomized,kressner2017recompression,malik2019fast,tsourakakis2010mach,erichson2017randomized,zhou2014decomposition}, CP format~\cite{erichson2017randomized,battaglino2018practical,biagioni2015randomized,vervliet2016randomized},  t-product~\cite{zhang2016randomized}, tensor networks~\cite{batselier2018computing}, and Tensor Train format~\cite{che2018randomized,huber2017randomized}.   Our work is most similar to \cite{che2018randomized,erichson2017randomized,zhou2014decomposition}. The algorithm for randomized HOSVD is presented in \cite{zhou2014decomposition}, and the corresponding analysis is presented in \cite{erichson2017randomized} (both unpublished manuscripts).    Randomized and adaptive versions of the STHOSVD were proposed and analyzed in \cite{che2018randomized}, but our manuscript uses a different distribution of random matrices (see \cref{sec:rand_algs} for a justification of our choice), and provides bounds in expectation. To our knowledge, our proposed algorithm for producing structure-preserving tensor decompositions and the corresponding error analysis are novel. Related to this algorithm is the CUR-type decomposition for tensors proposed in~\cite{saibaba2017hoid,drineas2007randomized}. In contrast, our algorithm produces decompositions in which the core tensor (rather than the factor matrices, in the aforementioned references) retains entries from the original tensor. 

\section{Background}\label{sec:background}
In this section, we introduce the necessary background information for working with tensors, and review the standard compression algorithms.  We also discuss the optimal approximation of a tensor for comparison purposes. Finally, we review the relevant background for randomized matrix algorithms, specifically the randomized SVD.

\subsection{Notation and preliminaries}
We denote a $d$-mode tensor $ \ten{X} \in \mb{R}^{I_1 \times \dots \times I_d}$ with entries 
$$x_{i_1,\dots,i_d} \qquad 1 \leq i_j \leq I_j \qquad j=1,\dots,d.$$ 
A tensor can be ``unfolded'' into a matrix by reordering the elements, and this process is known as {\em matricization}. There are $d$ different unfoldings for a $d$-mode tensor.  Each mode-$j$ unfolding arranges the resulting matrix such that the columns are the mode-$j$ fibers of the tensor. The mode-$j$ unfolding is denoted as  $\mat{X}_{(j)} \in \mb{R}^{I_j \times (\prod_{k \neq j} I_k)}$ for $j= 1,\dots,d$.

\paragraph{Tensor product} The tensor product (or mode product) is a fundamental operation for multiplying a tensor by a matrix. Given a matrix $\mat{A} \in \mb{R}^{K \times I_j}$, the mode-$j$ product of a tensor $\ten{X}$ with $\mat{A}$ is denoted $\ten{Y} = \ten{X} \times_j \mat{A}$, and has dimension $\ten{Y}\in \mb{R}^{I_1 \times \dots I_{j-1} \times K \times I_{j+1} \times \dots \times I_d}$. More specifically, the product can be expressed in terms of the entries of the tensor as  
$$\ten{Y}_{i_1,\dots,i_{j-1},k,i_{j+1},\dots,i_d} = \sum_{i_j=1}^{I_j} x_{i_1,\dots,i_d} a_{ki_j} \qquad 1 \leq k \leq K \qquad j=1,\dots,d.$$
The tensor product can also be expressed as the product of two matrices. That is, we can write $\ten{Y}_{(j)} = \mat{A} \mat{X}_{(j)}$ for $j=1,\dots,d$. The following lemma will be useful in our analysis. 
\begin{lemma}\label{lem:proj} Let $\ten{X} \in \mb{R}^{I_1 \times \dots \times I_d}$ and let $\mat{\Pi}_j \in \mb{R}^{I_j \times I_j}$ be a sequence of $d$ orthogonal projectors. Then for $j = 1,2,\dots,d$,
\[ \| \ten{X} - \ten{X}\bigtimes_{j=1}^d \mat\Pi_j\|_F^2  = \sum_{j=1}^d\|\ten{X} \bigtimes_{i=1}^{j-1} \mat{\Pi}_i \times_j (\mat{I} - \mat\Pi_j) \|_F^2  \leq \sum_{j=1}^d\|\ten{X} - \ten{X}\times_j \mat\Pi_j\|_F^2. \]
\end{lemma}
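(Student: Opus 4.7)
The plan is to prove the equality via a telescoping argument combined with a Pythagorean (orthogonality) identity, and then obtain the inequality by noting that each orthogonal projector has operator norm one.

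First, I would introduce the shorthand $\ten{Y}_0 := \ten{X}$ and $\ten{Y}_j := \ten{X}\bigtimes_{i=1}^{j}\mat\Pi_i$ for $j=1,\dots,d$, and telescope:
\[ \ten{X} - \ten{X}\bigtimes_{j=1}^d \mat\Pi_j \;=\; \sum_{j=1}^d (\ten{Y}_{j-1} - \ten{Y}_j) \;=\; \sum_{j=1}^d \ten{X}\bigtimes_{i=1}^{j-1}\mat\Pi_i \times_j (\mat{I}-\mat\Pi_j), \]
using the fact that inserting or removing the factor $\times_j\mat\Pi_j$ is exactly the difference $\ten{Y}_{j-1}-\ten{Y}_j$. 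This is straightforward and sets up the sum on the right-hand side of the claim.

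The main step, and the one I expect to be the crux of the argument, is proving that the $d$ summands are pairwise orthogonal in the Frobenius inner product; the equality in the lemma then follows by Pythagoras. For $j<k$, the $j$-th summand carries the factor $(\mat I - \mat\Pi_j)$ in mode $j$, while the $k$-th summand carries the factor $\mat\Pi_j$ in mode $j$ (since $j\leq k-1$). Unfolding both tensors along mode $j$ and using the identity $\mat Y_{(j)} = \mat A \mat X_{(j)}$ for mode-$j$ products, the inner product becomes $\operatorname{tr}\!\bigl(\mat M_k^{\top}\mat\Pi_j^{\top}(\mat I-\mat\Pi_j)\mat M_j\bigr)$ for appropriate unfoldings $\mat M_j,\mat M_k$ (whose other-mode factors can be absorbed into $\mat M_j$ and $\mat M_k$). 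Since $\mat\Pi_j$ is an orthogonal projector we have $\mat\Pi_j^{\top}(\mat I-\mat\Pi_j) = \mat\Pi_j - \mat\Pi_j^2 = \mat 0$, so the inner product vanishes. This yields the first equality.

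Finally, for the inequality I would bound each summand by dropping the inner projections. Unfolding $\ten{X}\bigtimes_{i=1}^{j-1}\mat\Pi_i \times_j (\mat I - \mat\Pi_j)$ along mode $j$ gives $(\mat I - \mat\Pi_j)\,\mat X_{(j)}\,\mat Q_j$, where $\mat Q_j$ is (a permutation of) the Kronecker product of the $\mat\Pi_i$ for $i<j$ with identities in the remaining modes; this $\mat Q_j$ is again an orthogonal projector, so $\|(\mat I-\mat\Pi_j)\mat X_{(j)}\mat Q_j\|_F \leq \|(\mat I-\mat\Pi_j)\mat X_{(j)}\|_F$, the right-hand side being precisely $\|\ten{X} - \ten{X}\times_j \mat\Pi_j\|_F$. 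Summing over $j$ completes the proof. The main technical subtlety is the careful bookkeeping of which modes carry $\mat\Pi_i$ versus $\mat I$ when passing between tensor and matrix form; everything else is a direct application of properties of orthogonal projectors.
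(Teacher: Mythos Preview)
Your proposal is correct and is essentially the standard argument: the paper does not prove this lemma itself but defers to \cite[Theorem 5.1]{vannieuwenhoven2012new}, whose proof proceeds exactly as you outline---telescope, establish pairwise Frobenius-orthogonality of the summands via $(\mat I-\mat\Pi_j)\mat\Pi_j=\mat 0$ in mode $j$, and then bound each term by dropping the Kronecker-product projector on the right (which has spectral norm at most one). Your bookkeeping is accurate; nothing further is needed.
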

The proof of this lemma can be found in \cite[Theorem 5.1]{vannieuwenhoven2012new}.

\paragraph{Tucker representation} The Tucker format of a tensor $\ten{X}$ of rank $(r_1,\dots, r_d)$ consists of a core tensor $\ten{G} \in \mb{R}^{r_1 \times \dots \times r_d}$ and factor matrices $\{\mat{A}_j\}_{j=1}^d$ with each $\mat{A}_j \in \mb{R}^{I_j \times r_j}$. For short, it is written as $[\ten{G}; \mat{A}_1,\dots,\mat{A}_d]$ and represents $\ten{X} = \ten{G} \bigtimes_{j=1}^d \mat{A}_j$.  

Note that storing a tensor in Tucker form is beneficial as it requires less storage than a full tensor when the target rank is significantly less than the original dimension.  For a $d$-mode tensor $\ten{X} \in \mb{R}^{I \times I \times \dots \times I}$ and target rank $(r,r,\dots,r)$ with $r \ll I$, the cost of storing the Tucker form of $\ten{X}$ is $\mc{O}(r^d+drI)$, compared to $\mc{O}(I^d)$ for a full tensor.

\paragraph{Kronecker products} The Kronecker product of two matrices $\mat{A} \in \mb{R}^{m \times n}$ and $\mat{B} \in \mb{R}^{k \times \ell}$ is 
$$\mat{A} \otimes \mat{B} = \bmat{a_{11}\mat{B} & a_{12}\mat{B} & \cdots & a_{1n}\mat{B} \\ a_{21}\mat{B} & a_{22}\mat{B} & \cdots & a_{2n}\mat{B} \\ \vdots & \vdots & \ddots & \vdots \\ a_{m1}\mat{B} & a_{m2}\mat{B} & \cdots & a_{mn}\mat{B}} \in \mb{R}^{mk \times n\ell}.$$  
We also note some properties of Kronecker products that will be useful in our analysis, namely
\begin{equation*}
\begin{aligned}
(\mat{A} \otimes \mat{B})(\mat{C} \otimes \mat{D}) &= \mat{AC} \otimes \mat{BD} \\
(\mat{A} \otimes \mat{B})^\top &= \mat{A}^\top \otimes \mat{B}^\top.
\end{aligned}
\end{equation*}
Kronecker products are also useful for expressing tensor mode products in terms of matrix-matrix multiplications. Suppose $\ten{Y} = \ten{X} \bigtimes_{j=1}^d \mat{A}_j$, then 
\begin{equation}\label{eqn:kron}
    \mat{Y}_{(j)} = \mat{A}_j \mat{X}_{(j)}(\mat{A}_d^\top \otimes \mat{A}_{d-1}^\top \otimes \dots \otimes \mat{A}_{j+1}^\top \otimes \mat{A}_{j-1}^\top \otimes \dots \otimes \mat{A}_1^\top).
\end{equation}

\subsection{HOSVD/STHOSVD}\label{ssec:hosvd}
The {\em Higher Order SVD} (HOSVD) and {\em Sequentially Truncated Higher Order SVD} (STHOSVD) are two popular algorithms for computing low-rank tensor decompositions in the Tucker format.  Given a $d$-mode tensor $\ten{X} \in \mb{R}^{I_1 \times \dots \times I_d}$ and target rank $\mat{r} = (r_1,\dots,r_d)$, both algorithms give a compressed representation for the tensor in the Tucker format 
$\ten{X} \approx [\ten{G};\mat{A}_1, \dots, \mat{A}_d],$
where $\ten{G} \in \mb{R}^{r_1 \times \dots \times r_d}$ is the core tensor of reduced rank, and $\{\mat{A}_j\}_{j=1}^d$ are factor matrices such that $\ten{X} = \ten{G} \bigtimes_{j=1}^d \mat{A}_j$.  The factor matrices $\mat{A}_j$ all have orthonormal columns, and each $\mat{A}_j \in \mb{R}^{I_j \times r_j}$.  

\paragraph{HOSVD} In the HOSVD algorithm, each mode is handled separately. The factor matrix $\mat{A}_j$ is formed from the first $r_j$ left singular vectors of $\mat{X}_{(j)}$.  Once all factor matrices $\mat{A}_j$ are found, the core tensor is formed by $\ten{G} = \ten{X} \bigtimes_{j=1}^d \mat{A}_j^\top$. The error in approximating $\ten{X}$ using the HOSVD depends on the error in each mode, as shown in the following theorem, the proof of which can be found in~\cite[Theorem 5.1]{vannieuwenhoven2012new}.  
\begin{theorem}
\label{lem:mode_err}
Let $\tenh{X} = [\ten{G};\mat{A}_1,\dots,\mat{A}_d]$ be the rank-$\mat{r}$  approximation to $d$-mode tensor $\ten{X} \in \mb{R}^{I_1\times \dots  \times I_d}$ using the HOSVD algorithm.  Then
$$\| \ten{X} - \tenh{X} \|_F^2 \leq \sum_{j=1}^d \| \ten{X}\times_j (\mat{I} - \mat{A}_j \mat{A}_j^\top) \|_F^2 = \sum_{j=1}^d  \sum_{i = r_j+1}^{I_j} \sigma_i^2(\mat{X}_{(j)}).$$
\end{theorem}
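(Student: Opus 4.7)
The plan is to reduce the theorem to two ingredients already available: the structural lemma \cref{lem:proj}, and the Eckart–Young theorem applied mode-by-mode. The starting observation is that, by the definition of the HOSVD, the core tensor satisfies $\ten{G} = \ten{X}\bigtimes_{j=1}^{d}\mat{A}_j^\top$, so the approximation can be rewritten as
\[
\tenh{X} \;=\; \ten{G}\bigtimes_{j=1}^{d}\mat{A}_j \;=\; \ten{X}\bigtimes_{j=1}^{d}\mat{A}_j\mat{A}_j^\top \;=\; \ten{X}\bigtimes_{j=1}^{d}\mat\Pi_j ,
\]
where $\mat\Pi_j := \mat{A}_j\mat{A}_j^\top$ is, since the columns of $\mat{A}_j$ are orthonormal, an orthogonal projector on $\mb{R}^{I_j}$. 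Thus $\tenh{X}$ fits the hypothesis of \cref{lem:proj} exactly.

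Applying \cref{lem:proj} with these projectors gives
\[
\|\ten{X}-\tenh{X}\|_F^2 \;\leq\; \sum_{j=1}^{d}\|\ten{X}-\ten{X}\times_j \mat\Pi_j\|_F^2 .
\]
Next, I would use linearity of the mode product in its matrix argument to rewrite $\ten{X}-\ten{X}\times_j\mat\Pi_j = \ten{X}\times_j(\mat{I}-\mat\Pi_j)$, which yields the first inequality in the claimed bound.

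For the equality with singular values, I would pass to the mode-$j$ unfolding. Because tensor–matrix multiplication on mode $j$ corresponds to matrix multiplication on the unfolding, one has $\|\ten{X}\times_j(\mat{I}-\mat\Pi_j)\|_F = \|(\mat{I}-\mat{A}_j\mat{A}_j^\top)\mat{X}_{(j)}\|_F$. By construction of the HOSVD, $\mat{A}_j$ consists of the leading $r_j$ left singular vectors of $\mat{X}_{(j)}$, so $\mat{A}_j\mat{A}_j^\top\mat{X}_{(j)}$ is precisely the best rank-$r_j$ approximation of $\mat{X}_{(j)}$. The Eckart–Young theorem then gives $\|(\mat{I}-\mat{A}_j\mat{A}_j^\top)\mat{X}_{(j)}\|_F^2 = \sum_{i=r_j+1}^{I_j}\sigma_i^2(\mat{X}_{(j)})$, and summing over $j$ finishes the proof.

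In this plan there is no real obstacle: all the work is bundled into \cref{lem:proj} (whose proof is cited) and the Eckart–Young theorem. The one place to be careful is verifying that the pointwise-in-$j$ quantity $\ten{X}\times_j(\mat{I}-\mat\Pi_j)$ really matricizes to $(\mat{I}-\mat{A}_j\mat{A}_j^\top)\mat{X}_{(j)}$, and that the HOSVD construction indeed makes the range of $\mat{A}_j$ equal to the dominant $r_j$-dimensional left singular subspace of $\mat{X}_{(j)}$ (so that Eckart–Young applies with equality rather than merely as a bound).
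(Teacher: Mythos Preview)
Your argument is correct and is exactly the derivation one would give from the ingredients the paper provides: you reduce to \cref{lem:proj} to get the inequality, then invoke Eckart--Young mode-by-mode to identify each summand with the tail singular values of $\mat{X}_{(j)}$. The paper itself does not spell out a proof but simply cites \cite[Theorem~5.1]{vannieuwenhoven2012new}, which is also the source of \cref{lem:proj}; your write-up is essentially that cited argument.
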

This theorem says that the error in the rank $\mat{r}$ approximation of the tensor $\ten{X}$ computed using the HOSVD is the sum of squares of the discarded singular values from each mode unfolding. To simplify the upper bound, we introduce the notation
\vspace{-.2cm}
\begin{equation}\label{eqn:delta}
\Delta_j^2 (\ten{X})\equiv  \sum_{i = r_j+1}^{I_j} \sigma_i^2(\mat{X}_{(j)}) \qquad j=1,\dots,d.
\vspace{-.3cm}
\end{equation}
With this notation, the error in the HOSVD satisfies $\| \ten{X} - \tenh{X} \|_F \leq \left(\sum_{j=1}^d \Delta_j^2(\ten{X})\right)^{1/2}$.

\paragraph{STHOSVD} An alternative to the HOSVD is the {\em sequentially truncated HOSVD} \break (STHOSVD) algorithm which also produces a compressed representation in the Tucker format. In contrast to HOSVD which processes the modes independently, the STHOSVD processes the modes sequentially. This makes the order in which the modes are processed important, since we may obtain different approximations by using different processing orders. At each stage, the core tensor is unfolded (initialized as the tensor $\ten{X}$) and the factor matrix $\mat{A}_j$ is obtained by taking the first $r_j$ singular vectors. A new core tensor is obtained by projecting the core tensor onto the subspace spanned by the columns of the factor matrix. A characteristic feature of the STHOSVD is that the core tensor shrinks at each iteration thus making the later modes cheaper to compute. 

Given a tensor $\ten{X}$ and the processing order $\rho = [1,2,\dots,d]$, the rank-$\mat{r}$ STHOSVD approximation of $\ten{X}$ is $\tenh{X} = [\ten{G}; \mat{A}_1,\dots, \mat{A}_d]$, where each factor matrix $\mat{A}_j \in \mb{R}^{I_j \times r_j}$ has orthonormal columns, and the core tensor $\ten{G} \in \mb{R}^{r_1 \times r_2 \times \dots \times r_d}$ is defined as $\ten{G} = \ten{X} \bigtimes_{j=1}^d \mat{A}_j^\top.$
At the $j$-th step of the process, we have a partially truncated core tensor $\ten{G}^{(j)} \in \mb{R}^{r_1,\dots, r_j,I_{j+1},\dots,I_d}$ defined as
$\ten{G}^{(j)} = \ten{X} \bigtimes_{i=1}^j \mat{A}_i^\top.$
Then the $j$-th partial approximation, of rank $(r_1,\dots,r_j,I_{j+1},\dots,I_d)$, can be defined as
$\tenh{X}^{(j)} = \ten{G}^{(j)} \bigtimes_{i = 1}^j \mat{A}_i.$ The algorithm is initialized with $\tenh{X}^{(0)} = \ten{X}$.

The approximation error in this case is the sum of errors in the successive approximations, and has the same upper bound as that of HOSVD.  This is shown in the following theorem, which assumes that the processing order is $\rho = [1,2,\dots,d]$.  If a different processing order is taken, the bound will remain the same, so this assumption is taken for ease of notation.  The proof of this theorem can be found in \cite[Theorem 6.5]{vannieuwenhoven2012new}.
\begin{theorem}
\label{lem:st_mode_err}
Let $\tenh{X} = [\ten{G}; \mat{A}_1,\dots,\mat{A}_d]$ be the rank-$\mat{r}$ STHOSVD approximation to $d$-mode tensor $\ten{X}$ with $\rho = [1,2,\dots,d]$ the processing order of modes. Then
\vspace{-.15cm}
\begin{equation*}
\begin{aligned}
\| \ten{X} - \tenh{X} \|_F^2 = & \> \sum_{j=1}^d \| \tenh{X}^{(j-1)}-\tenh{X}^{(j)} \|_F^2 \leq  \> \sum_{j=1}^d \| \ten{X} \times_j (\mat{I} - \mat{A}_j\mat{A}_j^\top) \|_F^2 = \sum_{j=1}^d \Delta_j^2(\mat{X}_{(j)}).
\end{aligned}
\end{equation*}
\end{theorem}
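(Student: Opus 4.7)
The plan is to decompose the overall error into a telescoping sum of the successive approximation errors, show those per-step differences are pairwise Frobenius-orthogonal (yielding a Pythagorean identity), and then bound each individual term by the corresponding mode-$j$ tail sum.

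First, I would use $\tenh X^{(0)}=\ten X$ and $\tenh X^{(d)}=\tenh X$ to write $\ten X - \tenh X = \sum_{j=1}^d (\tenh X^{(j-1)} - \tenh X^{(j)})$. Since mode products in distinct modes commute and $\ten G^{(j)} = \ten X \bigtimes_{i=1}^j \mat A_i^\top$ combined with $\tenh X^{(j)} = \ten G^{(j)} \bigtimes_{i=1}^j \mat A_i$ yields $\tenh X^{(j)} = \ten X \bigtimes_{i=1}^j \mat A_i \mat A_i^\top$, the $j$-th summand takes the explicit form
$$\tenh X^{(j-1)} - \tenh X^{(j)} = \ten X \bigtimes_{i=1}^{j-1} \mat A_i \mat A_i^\top \times_j (\mat I - \mat A_j \mat A_j^\top).$$
For any $j < k$, the $j$-th summand carries the factor $\mat I - \mat A_j \mat A_j^\top$ in mode $j$ while the $k$-th summand carries $\mat A_j \mat A_j^\top$ in mode $j$; unfolding along mode $j$ and using $(\mat I - \mat A_j \mat A_j^\top) \mat A_j \mat A_j^\top = \mat{0}$, the Frobenius inner product vanishes, giving the first equality of the theorem.

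Next, I would unfold the $j$-th difference in mode $j$ using \eqref{eqn:kron}, obtaining
$$(\tenh X^{(j-1)} - \tenh X^{(j)})_{(j)} = (\mat I - \mat A_j \mat A_j^\top)\, \mat X_{(j)} \,\mat V_j,$$
where $\mat V_j$ is the Kronecker product of the projectors $\mat A_i \mat A_i^\top$ for $i<j$ together with identities in the later modes. A Kronecker product of orthogonal projectors is itself an orthogonal projector, hence a contraction, so $\|Y\mat V_j\|_F \leq \|Y\|_F$ for every $Y$; this yields the middle inequality $\|\tenh X^{(j-1)} - \tenh X^{(j)}\|_F^2 \leq \|\ten X \times_j (\mat I - \mat A_j \mat A_j^\top)\|_F^2$.

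For the final step bounding each term by $\Delta_j^2(\ten X)$, I would use that in STHOSVD $\mat A_j$ is the top-$r_j$ left singular basis of $\mat G^{(j-1)}_{(j)} = \mat X_{(j)} \mat W_j$, where $\mat W_j$ is the Kronecker product of $\mat A_i$ for $i<j$ together with identities for $i\geq j$, and hence has orthonormal columns. By Eckart--Young applied to this compressed unfolding, $\|(\mat I - \mat A_j\mat A_j^\top)\mat X_{(j)}\mat W_j\|_F^2 = \sum_{i>r_j}\sigma_i^2(\mat X_{(j)}\mat W_j)$, and the singular-value monotonicity $\sigma_i(\mat X_{(j)}\mat W_j)\leq \sigma_i(\mat X_{(j)})$ (which follows from $\|\mat W_j\|_2 = 1$) upper-bounds this by $\Delta_j^2(\ten X)$. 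The main subtlety relative to the HOSVD proof is precisely this step: because $\mat A_j$ is optimal for the compressed matrix $\mat X_{(j)}\mat W_j$ rather than for $\mat X_{(j)}$ itself, Eckart--Young cannot be applied directly to $\mat X_{(j)}$, so the most natural path to the same $\Delta_j^2(\ten X)$ upper bound that appears in the HOSVD case is through Eckart--Young on the compressed unfolding together with singular-value monotonicity.
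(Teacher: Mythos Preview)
The paper does not prove this theorem itself; it defers to \cite[Theorem~6.5]{vannieuwenhoven2012new}. Your argument is correct and is essentially the argument in that reference: the telescoping sum plus mode-$j$ orthogonality of $(\mat I-\mat A_j\mat A_j^\top)$ against $\mat A_j\mat A_j^\top$ gives the Pythagorean identity, and the contraction property of the Kronecker product of orthogonal projectors gives the middle inequality. Your final step, applying Eckart--Young to the \emph{compressed} unfolding $\mat G^{(j-1)}_{(j)}=\mat X_{(j)}\mat W_j$ and then invoking $\sigma_i(\mat X_{(j)}\mat W_j)\le\sigma_i(\mat X_{(j)})$, is exactly the right mechanism (and is the same Loewner-ordering argument the paper later uses in the proof of \cref{thm:rst_err}).

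You have also correctly identified an imprecision in the theorem as printed. The displayed equality $\sum_j\|\ten X\times_j(\mat I-\mat A_j\mat A_j^\top)\|_F^2=\sum_j\Delta_j^2(\ten X)$ is inherited from the HOSVD statement (\cref{lem:mode_err}) and does not hold for STHOSVD in general: since $\mat A_j$ is the optimal rank-$r_j$ basis for $\mat X_{(j)}\mat W_j$ rather than for $\mat X_{(j)}$, Eckart--Young only gives $\|(\mat I-\mat A_j\mat A_j^\top)\mat X_{(j)}\|_F^2\ge\Delta_j^2(\ten X)$, so that middle expression is sandwiched between the first sum and $\sum_j\Delta_j^2(\ten X)$ only in the wrong direction. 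Your proof bypasses this by bounding $\|\tenh X^{(j-1)}-\tenh X^{(j)}\|_F^2=\|(\mat I-\mat A_j\mat A_j^\top)\mat X_{(j)}\mat W_j\|_F^2$ directly by $\Delta_j^2(\ten X)$, which is the substantive content of the cited result.
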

 The computational cost of the STHOSVD is lower than the HOSVD, which was established in \cite{vannieuwenhoven2012new} but is also reviewed in \cref{ssec:comp}. Although both the error in the HOSVD and the STHOSVD satisfy the same upper bound, it is not clear which algorithm has a lower error. There is strong numerical evidence to suggest that STHOSVD typically has a lower error, although counterexamples to this claim have been found~\cite{vannieuwenhoven2012new}. For these reasons, STHOSVD is preferable to HOSVD since it has a lower cost and the same worst case error bound. A downside to STHOSVD is that the processing order $\rho$ has to be determined in advance; some heuristics for this choice are given in~\cite{vannieuwenhoven2012new}.

\subsection{Best Approximation}
We would like to find an optimal rank-$\mat{r}$ approximation of a given tensor $\ten{X}$, which we will call $\tenh{X}_\text{opt}$.  Let $\mc{S} = \{\ten{Y}\in\mb{R}^{I_1 \times I_2 \times \dots \times I_d} : \text{rank}(\mat{Y}_{(j)}) \leq r_j, j = 1,\dots,d \}$.  Then $\tenh{X}_\text{opt}$ is an optimal tensor which satisfies the condition 
$$\min_{\ten{Y} \in \mc{S}} \| \ten{X} - \ten{Y} \|_F = \|\ten{X} - \tenh{X}_\text{opt} \|_F.$$
 The Eckart-Young theorem \cite{eckart1936approximation} states that the optimal rank-$r$ approximation to a matrix $\mat{A}$ can be constructed using the SVD truncated to rank-$r$.  Unfortunately, an analog of this result for Tucker forms does not exist in higher dimensions. The existence of $\tenh{X}_\text{opt}$ is guaranteed by~\cite[Theorem 10.8]{hackbusch2012tensor}. In general, computing $\tenh{X}_\text{opt}$ requires the solution of an optimization problem. In~\cite{de2000best}, the higher order orthogonal iteration (HOOI), was proposed to compute the ``best'' approximation by generating a sequence of iterates by cycling through the modes sequentially. 
Because the HOOI algorithm requires repeated computations with the tensor $\ten{X}$, its implementation for large-scale tensors is challenging because of the overwhelming computational cost.   
 Although neither the HOSVD nor the STHOSVD produce an optimal rank-$\mat{r}$ approximation, they do satisfy the inequality
\begin{equation}\label{eqn:xbest}
\| \ten{X} - \tenh{X} \|_F \leq \sqrt{d} \|\ten{X} - \tenh{X}_\text{opt} \|_F.
\end{equation}
The proofs are available for the HOSVD (Theorem 10.3) and STHOSVD (Theorem 10.5) in~\cite{hackbusch2012tensor}. The proof requires the observation that 
\begin{equation}\label{eqn:xbestdelta}
\Delta_j(\ten{X}) \leq \|\ten{X}-\tenh{X}_\text{opt}\|_F \qquad j=1,\dots,d.
\end{equation}
We highlight this inequality since it will be important for our subsequent analysis. The inequality~\cref{eqn:xbest} suggests that the outputs of the HOSVD and the STHOSVD are accurate for low dimensions and can be employed in three different ways: either as approximations to $\tenh{X}_\text{opt}$, as starting guesses to the HOOI algorithm, or to fit CP models.

\subsection{Randomized SVD}
It is well-known that computing the full SVD of a matrix  costs $\mc{O}(mn^2)$, assuming $m \geq n$. When the dimensions of a matrix $\mat{X} \in \mb{R}^{m \times n}$ are very large, the computational cost of a full SVD may be prohibitively expensive. Randomized SVD, popularized by~\cite{halko2011finding}, is a computationally efficient way to compute a rank-$r$ approximation of a matrix $\mat{X}$.  Assuming that $\mat{X}$ is approximately low-rank or has singular values that decay rapidly, the randomized SVD delivers a good low-rank representation of the matrix. Compared to the full SVD, the randomized SVD is much more computationally efficient across a wide-range of matrices.

We now describe the randomized SVD algorithm for computing a rank-$r$ approximation to $\mat{X}$, where the target rank $1 \leq r \leq \rank(\mat{X})$. The randomized algorithm has two distinct stages---the range finding stage, and the postprocessing stage to compute the low-rank approximation. In the range finding stage, we  first draw  a random matrix $\mat{\Omega} \in \mb{R}^{n \times (r+p)}$, where $r$ is the desired target rank, and $p$ is a small nonnegative integer used as an oversampling parameter. While many choices for the distribution of $\mat{\Omega}$ are  possible, in this paper, we use the Gaussian distribution, i.e., the entries $\mat{\Omega}_{ij}$ are independent and identically distributed $\mc{N}(0,1)$ random variables. Then, we compute the matrix  $\mat{Y} = \mat{X}\mat{\Omega}$ whose columns consist of random linear combinations  of the columns of $\mat{X}$.  By taking a thin QR factorization $\mat{Y} = \mat{Q}\mat{R}$, we get a matrix $\mat{Q}$ with orthonormal columns whose span approximates the range of $\mat{X}$.  If the matrix $\mat{X}$ is approximately rank $r$, $\range(\mat{Q})$ is a good approximation for $\range(\mat{X})$ as the range of $\mat{X}$ is characterized by just the first $r$ left singular vectors.  We can then express $\mat{X} \approx \mat{Q}\mat{Q}^\top \mat{X}$.  In the second stage, we convert the low rank representation into the SVD format. To this end, we compute the thin SVD of $\mat{B} = \mat{Q}^\top \mat{X}$, giving $\mat{B} = \widehat{\mat{U}}_\mat{B}\widehat{\mat{\Sigma}}\widehat{\mat{V}}^\top$.  We truncate this representation to rank $r$ by only retaining the first $r$ diagonal elements of $\mat{\Sigma}$ and drop the corresponding columns from $\widehat{\mat{U}}_\mat{B}$ and $\widehat{\mat{V}}$. Finally, we compute $\widehat{\mat{U}} = \mat{Q}\widehat{\mat{U}}_\mat{B}$, and obtain the low-rank approximation $\widehat{\mat{X}} = \widehat{\mat{U}}\widehat{\mat{\Sigma}}\widehat{\mat{V}}^\top$.  \cref{alg:randsvd} summarizes the process. 

\begin{algorithm}[H]
\begin{algorithmic}[1]
\REQUIRE matrix $\mat{X} \in \mb{R}^{m \times n}$, target rank $r$, \\ $\quad$ oversampling parameter $p \geq 0$ such that $r+p \leq \min \{m,n\}$, \\ $\quad$ Gaussian random matrix $\mat{\Omega} \in \mb{R}^{n \times (r+p)}$
\ENSURE $\widehat{\mat{U}} \in \mb{R}^{m \times r}$, $\widehat{\mat{\Sigma}} \in \mb{R}^{r \times r}$, and $\widehat{\mat{V}} \in \mb{R}^{n \times r}$ such that $\mat{X} \approx \widehat{\mat{U}}\widehat{\mat{\Sigma}} \widehat{\mat{V}}^\top$
\STATE Multiply $\mat{Y} \leftarrow \mat{X}\mat{\Omega}$
\STATE Thin QR factorization $\mat{Y} = \mat{Q}\mat{R}$
\STATE Form $\mat{B} \leftarrow \mat{Q}^\top \mat{X}$
\STATE Calculate thin SVD $\mat{B} = \widehat{\mat{U}}_\mat{B} \widehat{\mat{\Sigma}} \widehat{\mat{V}}^\top$
\STATE Form $\widehat{\mat{U}} \leftarrow \mat{Q} \widehat{\mat{U}}_\mat{B}(:\,,1:r)$
\STATE Compress $\widehat{\mat{\Sigma}} \leftarrow {\widehat{\mat{\Sigma}}}(1:r,1:r)$, and $\widehat{\mat{V}} \leftarrow \widehat{\mat{V}}(:\,,1:r)$ 
\end{algorithmic}
\caption{$[\widehat{\mat{U}},\widehat{\mat{\Sigma}},\widehat{\mat{V}}] =\text{RandSVD}(\mat{X},r,p,\mat{\Omega})$}
\label{alg:randsvd}
\end{algorithm}
We briefly review the computational cost of the RandSVD algorithm. Let $T$ denote the computational cost of a matrix-vector product (denoted matvec) involving the matrix $\mat{X}$. Then, the cost of the algorithm is 
\[ \text{Cost} = 2(r+p)\mc{O}(\mathsf{nnz}(\mat{X})) + \mc{O}(r^2(m+n)),\]
 where $\mathsf{nnz}$ denotes the number of nonzeros of $\mat{X}$. 

An error bound for~\cref{alg:randsvd} in the Frobenius norm is presented below, and we will use this result frequently in our analysis.  This theorem and its proof can be found in~\cite[Theorem 3]{zhang2016randomized}.

\begin{theorem} Let $\mat{X} \in \mb{R}^{m \times n}$, and $\mat{\Omega} \in \mb{R}^{n \times (r+p)}$ be a Gaussian random matrix.  Suppose $\mat{Q}$ is obtained from~\cref{alg:randsvd} with inputs target rank $r$ and oversampling parameter $p \geq 2$ such that $r+p \leq \min\{m,n\}$, and let $\mat{B}_r$ be the rank-$r$ truncated SVD of $\mat{Q}^\top \mat{X}$.  Then, the error in expectation satisfies
$$\mb{E}_{\mat{\Omega}}\| \mat{X}-\mat{Q}\mat{Q}^\top \mat{X} \|_F^2 \leq \mb{E}_{\mat{\Omega}} \| \mat{X} - \mat{Q}\mat{B}_r \|_F^2 \leq \left(1+\frac{r}{p-1}\right) \sum_{j=r+1}^{\min\{m,n\}} \sigma_j^2(\mat{X}).$$
\label{thm:randsvd_err}
\end{theorem}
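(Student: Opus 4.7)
The plan is to reduce the claim to a Halko--Martinsson--Tropp (HMT)-style analysis of the Gaussian range-finder, exploiting a Pythagorean decomposition of the error. The left-hand inequality is essentially immediate: since $\mat{Q}\mat{B}_r$ lies in $\range(\mat{Q})$ and $(\mat{I} - \mat{Q}\mat{Q}^\top)\mat{X}$ is orthogonal to that range, the Pythagorean identity gives
$$\|\mat{X} - \mat{Q}\mat{B}_r\|_F^2 = \|\mat{X} - \mat{Q}\mat{Q}^\top\mat{X}\|_F^2 + \|\mat{Q}^\top\mat{X} - \mat{B}_r\|_F^2,$$
so dropping the non-negative second term and taking expectation establishes the first inequality.

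For the main bound I would exploit the optimality of $\mat{Q}\mat{B}_r$ as the best rank-$r$ Frobenius approximation to $\mat{X}$ inside $\range(\mat{Q})$. Picking the rank-$r$ comparison matrix $\mat{Q}\mat{Q}^\top\mat{X}_r$, where $\mat{X}_r$ is the optimal rank-$r$ truncated SVD of $\mat{X}$, yields $\|\mat{X} - \mat{Q}\mat{B}_r\|_F^2 \le \|\mat{X} - \mat{Q}\mat{Q}^\top\mat{X}_r\|_F^2$. Writing the SVD in block form as $\mat{X}_r = \mat{U}_1\mat{\Sigma}_1\mat{V}_1^\top$ and $\mat{X} - \mat{X}_r = \mat{U}_2\mat{\Sigma}_2\mat{V}_2^\top$, the column spaces of $\mat{X}_r$ and $\mat{X} - \mat{X}_r$ are orthogonal, so the cross term in $\|(\mat{X}-\mat{X}_r) + (\mat{I}-\mat{Q}\mat{Q}^\top)\mat{X}_r\|_F^2$ vanishes, giving
$$\|\mat{X} - \mat{Q}\mat{Q}^\top\mat{X}_r\|_F^2 = \|\mat{X} - \mat{X}_r\|_F^2 + \|(\mat{I} - \mat{Q}\mat{Q}^\top)\mat{X}_r\|_F^2.$$

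The heart of the proof will be bounding $\mb{E}\|(\mat{I} - \mat{Q}\mat{Q}^\top)\mat{X}_r\|_F^2$. Setting $\mat{\Omega}_j \equiv \mat{V}_j^\top\mat{\Omega}$, I plan to exhibit the explicit candidate $\mat{Z} = \mat{Y}\mat{\Omega}_1^\dagger\mat{V}_1^\top$, which lies in $\range(\mat{Y}) = \range(\mat{Q})$. A direct computation with the block SVD gives $\mat{Z} = \mat{X}_r + \mat{U}_2\mat{\Sigma}_2\mat{\Omega}_2\mat{\Omega}_1^\dagger\mat{V}_1^\top$, so
$$\|(\mat{I} - \mat{Q}\mat{Q}^\top)\mat{X}_r\|_F^2 \le \|\mat{X}_r - \mat{Z}\|_F^2 = \|\mat{\Sigma}_2\mat{\Omega}_2\mat{\Omega}_1^\dagger\|_F^2.$$
Rotational invariance of the Gaussian together with orthonormality of $\mat{V}$ makes $\mat{\Omega}_1$ and $\mat{\Omega}_2$ independent standard Gaussian matrices; conditioning on $\mat{\Omega}_1$ and using $\mb{E}\|\mat{A}\mat{G}\mat{B}\|_F^2 = \|\mat{A}\|_F^2\|\mat{B}\|_F^2$ for standard Gaussian $\mat{G}$, followed by the classical pseudoinverse-moment identity $\mb{E}\|\mat{\Omega}_1^\dagger\|_F^2 = r/(p-1)$ (valid when $p \ge 2$), delivers $\mb{E}\|(\mat{I} - \mat{Q}\mat{Q}^\top)\mat{X}_r\|_F^2 \le \frac{r}{p-1}\|\mat{X} - \mat{X}_r\|_F^2$.

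Combining the three steps, together with $\|\mat{X} - \mat{X}_r\|_F^2 = \sum_{j > r}\sigma_j^2(\mat{X})$, yields the stated bound. The hard part is the deterministic construction of $\mat{Z}$: the ``trimmed'' HMT-style candidate $\mat{Y}\mat{\Omega}_1^\dagger\mat{V}_1^\top$ (as opposed to the full HMT candidate tailored to bound $\|(\mat{I}-\mat{Q}\mat{Q}^\top)\mat{X}\|_F^2$) is precisely what kills the $\|\mat{\Sigma}_2\|_F^2$ term that would otherwise appear and degrade the leading factor from $1 + r/(p-1)$ to $2 + r/(p-1)$. The only bookkeeping subtlety is verifying that $\mat{\Omega}_1$ has full row rank almost surely so that $\mat{\Omega}_1^\dagger$ is well-defined, which is standard for Gaussian matrices of size $r \times (r+p)$.
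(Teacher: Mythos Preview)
The paper does not prove this theorem itself; it cites \cite[Theorem 3]{zhang2016randomized}, whose proof follows exactly the Halko--Martinsson--Tropp template you outline: Pythagorean splitting for the left inequality, optimality of $\mat{Q}\mat{B}_r$ among rank-$r$ matrices in $\range(\mat{Q})$ compared against $\mat{Q}\mat{Q}^\top\mat{X}_r$, the deterministic bound $\|(\mat{I}-\mat{Q}\mat{Q}^\top)\mat{X}_r\|_F \le \|\mat{\Sigma}_2\mat{\Omega}_2\mat{\Omega}_1^\dagger\|_F$, and the Gaussian moment identities. Your proposal is correct and matches the cited argument essentially line for line.

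One small slip in your justification: the cross term between $\mat{X}-\mat{X}_r$ and $(\mat{I}-\mat{Q}\mat{Q}^\top)\mat{X}_r$ does not vanish because the \emph{column} spaces of $\mat{X}_r$ and $\mat{X}-\mat{X}_r$ are orthogonal---after applying $(\mat{I}-\mat{Q}\mat{Q}^\top)$ the column space of $\mat{X}_r$ is no longer $\range(\mat{U}_1)$. The correct reason is that the \emph{row} spaces are orthogonal: $(\mat{I}-\mat{Q}\mat{Q}^\top)\mat{X}_r$ still has row space contained in $\range(\mat{V}_1)$, while $\mat{X}-\mat{X}_r$ has row space in $\range(\mat{V}_2)$, so $\mat{V}_1^\top\mat{V}_2 = \mat{0}$ kills the trace. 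This is a one-word fix and does not affect the rest of your argument.
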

\begin{remark}
We will use two slightly different formulations of this theorem in our later results. Instead of $\| \mat{X} - \mat{Q}\mat{B}_r \|_F^2$, we will use $\| \mat{X} - \widehat{\mat{U}}\widehat{\mat{U}}^\top \mat{X} \|_F^2$. It is straightforward to show the equivalence between the two forms, see \cite[section 5.3]{saibaba2019randomized} for the explicit details. We will also use 
\begin{equation}\label{eqn:nonsquared_err}
\mb{E}_{\mat{\Omega}}\| \mat{X} - \mat{QQ}^\top \widehat{\mat{X}} \|_F \leq  \sqrt{1+\frac{r}{p-1}} \left(\sum_{j=r+1}^{\min\{m,n\}} \sigma_j^2(\mat{X})\right)^{1/2},
\end{equation}
which can be obtained by applying H{\"o}lder's inequality~\cite[Theorem 23.10]{jacod2012probability} to the stated result in the theorem.
\end{remark}

We could use other distributions for the random matrix $\mat{\Omega}$, but probabilistic bounds in expectation like \cref{thm:randsvd_err} do not exist for any distribution other than Gaussian. There are large deviation bounds for other distributions but we do not consider them here.

\section{Randomized HOSVD/STHOSVD} \label{sec:rand_algs}
In this section, we present randomized algorithms that are modified versions of the HOSVD and STHOSVD. We also develop rigorous error analysis and compare the two  algorithms in terms of computational cost.

\subsection{Algorithms}
As mentioned earlier, the HOSVD algorithm first computes an SVD of each mode unfolding to construct the factor matrices, which are then used to form the core tensor. Computing this decomposition for large, high-dimensional tensors can be prohibitively expensive. To address this computation cost, we replace a full SVD of each mode unfolding with a randomized SVD of each mode unfolding to construct the factor matrix. The procedure to compute the core tensor remains unchanged. This is reflected in~\cref{alg:rhosvd} and we call this the R-HOSVD algorithm. To our knowledge, this was first proposed in \cite{zhou2014decomposition}. 
\begin{algorithm}[!ht]
\begin{algorithmic}[1]
\REQUIRE $d$-mode tensor $\ten{X} \in \mb{R}^{I_1 \times I_2 \times \dots \times I_d} $, target rank vector $\mat{r} \in \mb{N}^d$, \\ $\quad$ oversampling parameter $p \geq 0$ such that $r_j+p \leq \min \{I_j,\prod_{i\neq j} I_i\}$ for $j = 1,\dots,d$
\ENSURE $\tenh{X} = [\ten{G}; \mat{A}_1,\dots,\mat{A}_d]$
\FOR{$j = 1:d$}
\STATE Draw random Gaussian matrix $\mat{\Omega}_j \in \mb{R}^{\prod_{i \neq j} I_i \times (r_j+p)}$
\STATE $[\widehat{\mat{U}},\widehat{\mat{\Sigma}},\widehat{\mat{V}}] = $ RandSVD$(\mat{X}_{(j)},r_j,p,\mat{\Omega}_j)$
\STATE Set $\mat{A}_j \leftarrow \widehat{\mat{U}}$
\ENDFOR
\STATE Form $\ten{G} = \ten{X} \bigtimes_{j=1}^d \mat{A}_j^\top$
\end{algorithmic}
\caption{Randomized HOSVD}
\label{alg:rhosvd}
\end{algorithm}
The randomized version of STHOSVD is obtained in a similar way; at each step, the SVD  of the unfolded core tensor is replaced with a randomized SVD. This is shown in \cref{alg:rsthosvd} (we call this R-STHOSVD) and is similar to the algorithm proposed in \cite{che2018randomized}. The major difference of \cref{alg:rsthosvd} compared to \cite{che2018randomized} is the distribution of random matrices $\{\mat\Omega_j\}_{j=1}^d$. The authors in \cite{che2018randomized} advocate $\mat\Omega_j$ constructed as a Khatri-Rao product of Gaussian random matrices as opposed to standard Gaussian random matrices which we use. The main reason for avoiding standard Gaussian random matrices appear to be because of the high storage costs; however, we note that the matrices $\mat\Omega_j$ need not be stored explicitly. Its entries may be generated on-the-fly, either column-wise or in appropriately sized blocks. We next analyze the error in the decompositions produced using the R-HOSVD and the R-STHOSVD algorithms.
\begin{algorithm}[!ht]
\begin{algorithmic}[1]
\REQUIRE $d$-mode tensor $\ten{X} \in \mb{R}^{I_1 \times I_2 \times \dots \times I_d}$, processing order $\rho$, target rank vector $\mat{r} \in \mb{N}^d$, \\ $\quad$ oversampling parameter $p \geq 0$ such that $r_j+p \leq \min \{I_j,\prod_{i\neq j} I_i\}$ for $j=1,\dots,d$
\ENSURE $\tenh{X} = [\ten{G}; \mat{A}_1,\dots,\mat{A}_d]$ 
\STATE Set $\ten{G} = \ten{X}$
\FOR {$j = 1:d$}
\STATE Draw random Gaussian matrix $\mat{\Omega}_{\rho_j} \in \mb{R}^{\prod_{\rho_i \neq \rho_j} I_{\rho_i} \times (r_{\rho_j}+p)}$
\STATE $[\widehat{\mat{U}},\widehat{\mat{\Sigma}},\widehat{\mat{V}}] =$ RandSVD$(\mat{G}_{(\rho_j)},r_{\rho_j},p,\mat{\Omega}_{\rho_j})$
\STATE Set $\mat{A}_{\rho_j} \leftarrow \widehat{\mat{U}}$
\STATE Update $\mat{G}_{(\rho_j)} \leftarrow \widehat{\mat{\Sigma}}\widehat{\mat{V}}^\top$
\ENDFOR
\STATE $\ten{G} \leftarrow \mat{G}_{(\rho_d)}$, in tensor form. 
\end{algorithmic}
\caption{Randomized STHOSVD}
\label{alg:rsthosvd}
\end{algorithm}

\subsection{Error Analysis}
In the results below, we assume that the matrices $\{\mat\Omega_j\}_{j=1}^d$ are standard Gaussian random matrices of appropriate sizes.
\begin{theorem}[Randomized HOSVD]
\label{thm:rhosvd_err}
Let $\tenh{X} = [\ten{G};\mat{A}_1,\dots,\mat{A}_d]$ be the output of \cref{alg:rhosvd} with inputs target rank $\mat{r} = (r_1,r_2,\dots,r_d)$ and oversampling parameter $p \geq 2$. Furthermore, assume that $p$ satisfies $r_j + p \leq \min\{I_j,\prod_{i\neq j} I_i\}$ for $j=1,\dots,d$. Then, the expected error in the approximation is 
\begin{eqnarray}
\mb{E}_{\{\mat{\Omega}_k\}_{k=1}^d} \|\ten{X}-\tenh{X} \|_F \leq & \> \left(\sum_{j=1}^d \left(1+\frac{r_j}{p-1} \right) \Delta_j^2(\ten{X}) \right)^{1/2} \\
\leq & \>  \left(d+\frac{\sum_{j=1}^d r_j}{p-1}  \right)^{1/2} \|\ten{X} - \tenh{X}_\text{opt} \|_F. \label{eqn:rhosvd_err}
\end{eqnarray}
\end{theorem}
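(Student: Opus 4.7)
The plan is to chain together three ingredients: the single-mode projection inequality from \cref{lem:proj}, the expected-error bound for the randomized SVD from \cref{thm:randsvd_err}, and the sub-optimality inequality~\cref{eqn:xbestdelta}. First I would observe that \cref{alg:rhosvd} produces $\tenh{X} = \ten{X}\bigtimes_{j=1}^d \mat\Pi_j$, where $\mat\Pi_j = \mat{A}_j\mat{A}_j^\top$ is an orthogonal projector because $\mat{A}_j$ has orthonormal columns. Applying \cref{lem:proj} then gives
\[
\|\ten{X} - \tenh{X}\|_F^2 \;\leq\; \sum_{j=1}^d \|\ten{X} - \ten{X}\times_j \mat{A}_j\mat{A}_j^\top\|_F^2 \;=\; \sum_{j=1}^d \|\mat{X}_{(j)} - \mat{A}_j\mat{A}_j^\top \mat{X}_{(j)}\|_F^2,
\]
where the last equality follows because the Frobenius norm is invariant under mode-$j$ unfolding and a single mode product unfolds to matrix multiplication on the left.

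Next I would take expectations of both sides over $\{\mat\Omega_k\}_{k=1}^d$. The key point here is that each summand on the right depends \emph{only} on $\mat\Omega_j$ (the random matrix used in the $j$-th loop iteration of \cref{alg:rhosvd}), and the $\mat\Omega_j$'s are drawn independently, so expectation reduces to a per-mode expectation. For each fixed $j$, applying \cref{thm:randsvd_err} with $\mat{X}\leftarrow \mat{X}_{(j)}$, target rank $r_j$, and oversampling $p$ yields
\[
\mathbb{E}_{\mat\Omega_j}\|\mat{X}_{(j)} - \mat{A}_j\mat{A}_j^\top \mat{X}_{(j)}\|_F^2 \;\leq\; \left(1 + \frac{r_j}{p-1}\right)\sum_{i=r_j+1}^{I_j}\sigma_i^2(\mat{X}_{(j)}) \;=\; \left(1 + \frac{r_j}{p-1}\right)\Delta_j^2(\ten{X}),
\]
using the notation in~\cref{eqn:delta}. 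Summing over $j$ bounds $\mathbb{E}\|\ten{X} - \tenh{X}\|_F^2$. To convert this into a bound on $\mathbb{E}\|\ten{X} - \tenh{X}\|_F$, I would invoke Jensen's inequality (equivalently, H\"older's inequality, as the remark after \cref{thm:randsvd_err} does) using the concavity of $\sqrt{\cdot}$, which gives the first claimed inequality.

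For the second inequality, I would simply apply~\cref{eqn:xbestdelta}: $\Delta_j(\ten{X}) \leq \|\ten{X} - \tenh{X}_\text{opt}\|_F$ for each $j$, so
\[
\sum_{j=1}^d \left(1 + \frac{r_j}{p-1}\right)\Delta_j^2(\ten{X}) \;\leq\; \left(d + \frac{\sum_{j=1}^d r_j}{p-1}\right)\|\ten{X} - \tenh{X}_\text{opt}\|_F^2,
\]
and taking square roots finishes the proof. I do not expect any single step to be a real obstacle; the most delicate point is the bookkeeping in taking expectations when the $d$ summands depend on disjoint blocks of the random data, so that the expectation over the joint distribution collapses to a sum of per-mode expectations and \cref{thm:randsvd_err} can be applied cleanly.
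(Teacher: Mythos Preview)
Your proposal is correct and follows essentially the same route as the paper: apply \cref{lem:proj} to reduce to a sum of per-mode projection errors, use independence and linearity of expectation to isolate each $\mat\Omega_j$, bound each term via \cref{thm:randsvd_err}, pass from the second moment to the first via H\"older/Jensen, and finish with~\cref{eqn:xbestdelta}. The only cosmetic difference is that the paper phrases the norm-unfolding step as $\|\ten{X}\times_j(\mat{I}-\mat{A}_j\mat{A}_j^\top)\|_F^2 = \|(\mat{I}-\mat{A}_j\mat{A}_j^\top)\mat{X}_{(j)}\|_F^2$ directly, whereas you write it as $\|\mat{X}_{(j)} - \mat{A}_j\mat{A}_j^\top\mat{X}_{(j)}\|_F^2$; these are of course identical.
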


\begin{proof}
From~\cref{lem:proj}, we can write 
$$ \mb{E}_{\{\mat{\Omega}_k\}_{k=1}^d} \| \ten{X} - \tenh{X} \|_F^2 \leq \mb{E}_{\{\mat{\Omega}_k\}_{k=1}^d} \sum_{j=1}^d \|\ten{X} \times_j (\mat{I} - \mat{A}_j\mat{A}_j^\top) \|_F^2 = \sum_{j=1}^d \mb{E}_{\mat{\Omega}_j} \| \ten{X} \times_j (\mat{I} - \mat{A}_j\mat{A}_j^\top) \|_F^2, $$ 
where the equality comes from linearity of expectations and the independence of $\mat{\Omega}_j$ for each mode $j$.  
We can unfold each term in the summation as $\| \ten{X} \times_j (\mat{I} - \mat{A}_j\mat{A}_j^\top) \|_F^2 = \| (\mat{I}-\mat{A}_j\mat{A}_j^\top) \mat{X}_{(j)} \|_F^2$. Then, by applying~\cref{thm:randsvd_err}, we can bound the expected value of the squared error in each mode to obtain
$$\mb{E}_{\{\mat{\Omega}_k\}_{k=1}^d} \| \ten{X} - \tenh{X} \|_F^2 \leq \sum_{j=1}^d \left(1+\frac{r_j}{p-1} \right) \Delta^2_j(\ten{X}).$$
Finally, H{\"o}lder's inequality gives 
$$\mb{E}_{\{\mat{\Omega}_k\}_{k=1}^d} \| \ten{X} - \tenh{X} \|_F \leq \left(\mb{E}_{\{\mat{\Omega}_k\}_{k=1}^d} \| \ten{X} - \tenh{X} \|_F^2 \right)^{1/2} \leq \left(\sum_{j=1}^d \left(1+\frac{r_j}{p-1} \right) \Delta^2_j(\ten{X}) \right)^{1/2}.$$
For the second inequality, recall that   
$\Delta^2_j(\ten{X}) \leq \|\ten{X} - \ten{\hat{X}}_\text{opt} \|_F^2$
from~\cref{eqn:xbestdelta}. Thus, combined with the previous inequality, we have
$$\mb{E}_{\{\mat{\Omega}_k\}_{k=1}^d} \| \ten{X} - \tenh{X} \|_F \leq \left(d + \frac{\sum_{j=1}^dr_j}{p-1} \right)^{1/2} \|\ten{X} - \tenh{X}_\text{opt} \|_F. $$
\end{proof}

To compare this result to the approximation error obtained using the HOSVD algorithm, we consider a few special cases.  Let $r = \max_{1 \leq j \leq d} r_j$. Then, if $p = r+1$, this factor becomes 
$$\mb{E}_{\{\mat{\Omega}_k\}_{k=1}^d}\| \ten{X} - \tenh{X} \|_F \leq \sqrt{2} \|\ten{X} - \tenh{X}_\text{HOSVD} \|_F\text{ or }\mb{E}_{\{\mat{\Omega}_k\}_{k=1}^d}\| \ten{X} - \tenh{X} \|_F \leq \sqrt{2d} \|\ten{X} - \tenh{X}_\text{opt} \|_F.$$  
Similarly, if we choose $p = \lceil \frac{r}{\epsilon} \rceil +1$ for some $\epsilon > 0$, the error satisfies $$\mb{E}_{\{\mat{\Omega}_k\}_{k=1}^d}\| \ten{X} - \tenh{X} \|_F \leq \sqrt{1+\epsilon} \|\ten{X} - \tenh{X}_\text{HOSVD} \|_F\text{ or }\mb{E}_{\{\mat{\Omega}_k\}_{k=1}^d}\| \ten{X} - \tenh{X} \|_F \leq \sqrt{d(1+\epsilon)} \|\ten{X} - \tenh{X}_\text{opt} \|_F.$$ This shows that the application of a randomized SVD in each mode of the tensor does not seriously deteriorate the accuracy compared to using an SVD.  

Now consider the randomized STHOSVD approximation. When bounding the error in expectation in this case, it is important to note that at each intermediate step, the partially truncated core tensor is a random tensor.  This is in contrast to the R-HOSVD, where we only needed to account for $\mat{\Omega}_j$ for each mode because the operations are independent across the modes. Computing the modes sequentially means, when processing mode $j$, we must account for all the random matrices $\mat{\Omega}_k$, where $k = 1,\dots,j$.  

For this theorem, we use the same notation introduced in \cref{ssec:hosvd} for the STHOSVD, in that the partially truncated core tensor at step $j$ is $\ten{G}^{(j)} = \ten{X} \bigtimes_{i=1}^j \mat{A}_i^\top$, giving a partial approximation $\tenh{X}^{(j)} = \ten{G}^{(j)} \bigtimes_{i=1}^j \mat{A}_i$.

\begin{theorem}[Randomized STHOSVD]
\label{thm:rst_err}
Let $\tenh{X} = [\ten{G};\mat{A}_1,\dots,\mat{A}_d]$ be the output of~\cref{alg:rsthosvd} with inputs target rank $\mat{r} = (r_1,r_2,\dots,r_d)$, processing order $\rho$, and oversampling parameter $p \geq 2$. Furthermore, assume that $p$ satisfies $r_j + p \leq \min\{I_j,\prod_{i\neq j} I_i\}$ for $j=1,\dots,d$. Then, the approximation error in expectation is
\begin{eqnarray}
\mb{E}_{\{\mat{\Omega}_k\}_{k=1}^d} \|\ten{X}-\ten{\widehat{X}}\|_F \leq & \> \left(\sum_{j = 1}^d \left(1+\frac{r_j}{p-1}\right)\Delta^2_j(\ten{X}) \right)^{1/2} \\
\leq & \> \left(d+\frac{\sum_{j=1}^d r_j}{p-1}  \right)^{1/2} \|\ten{X} - \ten{\widehat{X}}_\text{opt} \|_F. \label{eqn:rst_err}
\end{eqnarray}
\end{theorem}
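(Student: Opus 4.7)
The plan is to follow the structure of the deterministic STHOSVD bound in \cref{lem:st_mode_err} and inject the randomized error from \cref{thm:randsvd_err} at each mode via the tower property of conditional expectation. First, I would use the telescoping identity $\|\ten{X}-\tenh{X}\|_F^2 = \sum_{j=1}^d \|\tenh{X}^{(j-1)} - \tenh{X}^{(j)}\|_F^2$. Writing $\tenh{X}^{(j)} = \ten{G}^{(j)} \bigtimes_{i=1}^j \mat{A}_i$ with $\ten{G}^{(j)} = \ten{G}^{(j-1)} \times_j \mat{A}_j^\top$, each difference factors as
\[
\tenh{X}^{(j-1)} - \tenh{X}^{(j)} = \left(\ten{G}^{(j-1)} \times_j (\mat{I}-\mat{A}_j \mat{A}_j^\top)\right) \bigtimes_{i=1}^{j-1} \mat{A}_i.
\]
Because $\mat{A}_1,\dots,\mat{A}_{j-1}$ have orthonormal columns, their mode products preserve the Frobenius norm, so
\[
\|\tenh{X}^{(j-1)} - \tenh{X}^{(j)}\|_F^2 = \|(\mat{I}-\mat{A}_j\mat{A}_j^\top)\mat{G}^{(j-1)}_{(j)}\|_F^2.
\]

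Next, observe that $\mat{A}_j$ depends on $\mat{\Omega}_1,\dots,\mat{\Omega}_{j-1}$ (through $\ten{G}^{(j-1)}$) together with the independent Gaussian $\mat{\Omega}_j$. Conditioning on $\mat{\Omega}_1,\dots,\mat{\Omega}_{j-1}$ and applying \cref{thm:randsvd_err} to $\mat{G}^{(j-1)}_{(j)}$ with $\mat{\Omega}_j$ gives
\[
\mb{E}_{\mat{\Omega}_j}\|(\mat{I}-\mat{A}_j\mat{A}_j^\top)\mat{G}^{(j-1)}_{(j)}\|_F^2 \leq \left(1+\tfrac{r_j}{p-1}\right) \sum_{i > r_j} \sigma_i^2(\mat{G}^{(j-1)}_{(j)}).
\]
The main obstacle is that the right-hand side is a random quantity depending on the earlier $\mat{\Omega}_i$, and it must be bounded deterministically by $\Delta_j^2(\ten{X})$. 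To do this I would invoke \cref{eqn:kron} to write $\mat{G}^{(j-1)}_{(j)} = \mat{X}_{(j)} \mat{Z}_j$, where $\mat{Z}_j$ is the Kronecker product of $\mat{A}_i$ for $i < j$ and identity factors for $i > j$. Since each $\mat{A}_i$ has orthonormal columns, so does $\mat{Z}_j$, and standard singular value inequalities for products with an isometry yield $\sigma_i(\mat{G}^{(j-1)}_{(j)}) \leq \sigma_i(\mat{X}_{(j)})$ for every $i$, almost surely. Summing the tails gives the pathwise bound $\sum_{i > r_j} \sigma_i^2(\mat{G}^{(j-1)}_{(j)}) \leq \Delta_j^2(\ten{X})$.

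Assembling these estimates, taking total expectation over $\mat{\Omega}_1,\dots,\mat{\Omega}_d$, and summing over $j$ yields $\mb{E}\|\ten{X}-\tenh{X}\|_F^2 \leq \sum_{j=1}^d (1+\tfrac{r_j}{p-1})\Delta_j^2(\ten{X})$. The first stated inequality then follows from H{\"o}lder's inequality exactly as in the proof of \cref{thm:rhosvd_err}, and the second follows from $\Delta_j(\ten{X}) \leq \|\ten{X}-\tenh{X}_\text{opt}\|_F$ in \cref{eqn:xbestdelta}. The key difference compared to the R-HOSVD analysis is that independence of the $\mat{\Omega}_j$ across modes is not sufficient here: the matrix to which RandSVD is applied at step $j$ is itself random, and the deterministic singular value monotonicity under right multiplication by a matrix with orthonormal columns is what decouples that randomness and allows the conditional RandSVD bound to be promoted to a genuine bound in expectation.
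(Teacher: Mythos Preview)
Your proposal is correct and follows essentially the same route as the paper: telescoping via \cref{lem:proj}, iterated expectation over the independent $\mat{\Omega}_j$, application of \cref{thm:randsvd_err} to $\mat{G}^{(j-1)}_{(j)}$, and the singular value monotonicity $\sigma_i(\mat{X}_{(j)}\mat{Z}_j)\leq\sigma_i(\mat{X}_{(j)})$ coming from $\mat{Z}_j^\top\mat{Z}_j=\mat{I}$. The only cosmetic differences are that you state $\|\tenh{X}^{(j-1)}-\tenh{X}^{(j)}\|_F^2 = \|(\mat{I}-\mat{A}_j\mat{A}_j^\top)\mat{G}^{(j-1)}_{(j)}\|_F^2$ as an equality (which it indeed is, since $\mat{Z}_j$ has orthonormal columns) where the paper records only the inequality, and you invoke ``standard singular value inequalities for products with an isometry'' where the paper spells this out via the Loewner ordering $\mat{X}_{(j)}\mat{Z}_j\mat{Z}_j^\top\mat{X}_{(j)}^\top \preceq \mat{X}_{(j)}\mat{X}_{(j)}^\top$.
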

\begin{proof}
We first assume that the processing order is $\rho = [1,\dots,d]$. The first equality in \cref{lem:proj} and the linearity of expectations together give 
\begin{equation}\label{eqn:lin_exp}
\mb{E}_{\{\mat{\Omega}_k\}_{k=1}^d} \|\ten{X}-\tenh{X} \|_F^2 = \sum_{j=1}^d  \mb{E}_{\{\mat{\Omega}_k\}_{k=1}^d} \|\tenh{X}^{(j-1)}-\tenh{X}^{(j)} \|_F^2 = \sum_{j=1}^d  \mb{E}_{\{\mat{\Omega}_k\}_{k=1}^j} \|\tenh{X}^{(j-1)}-\tenh{X}^{(j)} \|_F^2.
\end{equation}
We have used the fact that the $j$-th term in the summation does not depend on the random matrices $\{\mat\Omega_k\}_{k > j}$. We first consider $\mb{E}_{\{\mat{\Omega}_k\}_{k=1}^j} \|\tenh{X}^{(j-1)}-\tenh{X}^{(j)} \|_F^2.$  Since all the $\mat{\Omega}_k$'s are independent, we can write the expectation in an iterated form as
$$\mb{E}_{\{\mat{\Omega}_k\}_{k=1}^j} \|\tenh{X}^{(j-1)}-\tenh{X}^{(j)} \|_F^2 = \mb{E}_{\{\mat{\Omega}_k\}_{k=1}^{j-1}} \left\{\mb{E}_{\mat{\Omega}_j} \|\tenh{X}^{(j-1)}-\tenh{X}^{(j)} \|_F^2 \right\}.$$
The $j$-th term, which measures the difference in the sequential iterates, can be expressed as 
$$\| \tenh{X}^{(j-1)} - \tenh{X}^{(j)} \|_F^2 = \|\ten{G}^{(j-1)} \bigtimes_{i=1}^{j-1} \mat{A}_i \times_j (\mat{I} - \mat{A}_j \mat{A}_j^\top) \|_F^2.$$
Now let 
\[ \mat{Z}_j \equiv \underbrace{\mat{I} \otimes \cdots \otimes \mat{I}}_{d-j \text{ terms}} \otimes \mat{A}_{j-1} \otimes \cdots \otimes\mat{A}_1 .\]
If we unfold the difference $\tenh{X}^{(j-1)} - \tenh{X}^{(j)}$ along the $j$-th mode, using~\cref{eqn:kron} we have 
\begin{equation}\label{eqn:exp_unfold}
\begin{aligned}
 \|\tenh{X}^{(j-1)}-\tenh{X}^{(j)} \|_F^2 &=  \| (\mat{I} - \mat{A}_j\mat{A}_j^\top) \mat{G}^{(j-1)}_{(j)} \mat{Z}_j^\top  \|_F^2 \\
&\leq \|(\mat{I}-\mat{A}_j\mat{A}_j^\top) \mat{G}^{(j-1)}_{(j)} \|_F^2.
\end{aligned}
\end{equation}
The inequality comes as $\mat{Z}_j$ has orthonormal columns for every $j$. 

Now, let $\mat{\Gamma}_j = \mat{G}_{(j)}^{(j-1)}$ for simplicity. We take expectations and bound this last quantity in \cref{eqn:exp_unfold} using~\cref{thm:randsvd_err} (keeping $\{\mat\Omega_k\}_{k=1}^{j-1}$ fixed), as
\begin{equation}\label{eqn:randbound}
\mb{E}_{\mat{\Omega}_j} \| (\mat{I} - \mat{A}_j\mat{A}_j^\top) \mat{\Gamma}_j \|_F^2 \leq  \left(1 + \frac{r_j}{p-1} \right) \sum_{i=r_j+1}^{I_j} \sigma_i^2(\mat{\Gamma}_j).
\end{equation}
We recall the definition and properties of Loewner partial ordering \cite[Section 7.7]{horn1990matrix}. Let $\mat{M},\mat{N} \in \mathbb{R}^{n\times n}$ be symmetric; $\mat{M} \preceq \mat{N}$ means $\mat{N}-\mat{M} $ is positive semidefinite. For $\mat{S} \in \mathbb{R}^{n\times m}$, then $\mat{S}^\top \mat{MS} \preceq \mat{S}^\top\mat{NS}$. 
Furthermore, $\lambda_i(\mat{M}) \leq \lambda_i(\mat{N})$ for $i=1,\dots,n$.  Since $\mat{Z}_j$ has orthonormal columns, $\mat{Z}_j\mat{Z}_j^\top$ is a projector so that
$$\mat{\Gamma}_j\mat{\Gamma}_j^\top = \mat{X}_{(j)} \mat{Z}_j \mat{Z}_j^\top \mat{X}_{(j)}^\top \preceq \mat{X}_{(j)}\mat{X}_{(j)}^\top,$$ and the singular values of $\mat{\Gamma}_j$, which are squared eigenvalues of $\mat{\Gamma}_j\mat{\Gamma}_j^\top$, satisfy
\begin{equation}\label{eqn:B_svalues}
\sum_{i=r_j+1}^{I_j} \sigma_i^2(\mat{\Gamma}_j) \leq \sum_{i=r_j+1}^{I_j} \sigma_i^2(\mat{X}_{(j)}) = \Delta_j^2(\ten{X}).
\end{equation}
To summarize, \cref{eqn:lin_exp,eqn:exp_unfold,eqn:randbound,eqn:B_svalues} combined give
\begin{equation*}
\mb{E}_{\{\mat{\Omega}_k\}_{k=1}^d} \|\ten{X}-\tenh{X} \|_F^2 \leq \sum_{j=1}^d \mb{E}_{\{\mat{\Omega}_k\}_{k=1}^{j-1}} \left(1 + \frac{r_j}{p-1} \right) \Delta_j^2(\ten{X}) = \sum_{j=1}^d \left(1 + \frac{r_j}{p-1} \right) \Delta_j^2(\ten{X}).
\end{equation*}
The equality follows since the tensor $\ten{X}$ is deterministic. 
Finally, we have by H{\"o}lder's inequality and~\cref{eqn:delta} that
\begin{equation*}
\mb{E}_{\{\mat{\Omega}_k\}_{k=1}^d} \|\ten{X}-\tenh{X} \|_F \leq \left(\sum_{j = 1}^d \left(1+\frac{r_j}{p-1}\right) \Delta_j^2(\ten{X}) \right)^{1/2}  \leq \left(d+\frac{\sum_{j=1}^d r_j}{p-1}  \right)^{1/2} \|\ten{X} - \tenh{X}_\text{opt} \|_F.
\end{equation*}

In the general case, when the processing order does not equal $\rho = [1,\dots,d]$, the proof is similar. We only need to work with the processed order. We omit the details. 
\end{proof} 
We make several observations regarding \cref{thm:rst_err}. First, the upper bound for the error is the same for the R-STHOSVD as for the R-HOSVD (\cref{thm:rhosvd_err}). Second, this result says that the error bound for R-STHOSVD is independent of the processing order.  This means that while some processing orders may result in more accurate decompositions, every processing order has the same worst-case error bound. Our recommendation is to pick a processing order that minimizes the computational cost; see \cref{ssec:comp} for details. Third, the discussion following \cref{thm:rhosvd_err} regarding the choice of the oversampling parameter is applicable to the R-STHOSVD as well.

\paragraph{Extensions} There are several possible extensions of our results. First, we can extend this analysis to develop concentration results that give insight into the tail bounds. These can be obtained by combining our analysis with the results from, e.g., \cite[Theorem 5.8]{gu2015subspace}. Second, other distributions of random matrices may be used instead of standard Gaussian random matrices. Examples include Rademacher random matrices, sparse Rademacher random matrices, subsampled randomized Fourier transforms, etc. It is also possible to combine the analysis with the probabilistic bounds for the other decompositions. Typically, expectation bounds of the type presented in \cref{thm:rst_err} and \cref{thm:rhosvd_err} are only possible for the standard Gaussian random matrices and one can only develop tail bounds. We do not pursue these extensions here but may consider them in future work.

\subsection{Computational Cost}\label{ssec:comp}
We discuss the computational costs of the proposed randomized algorithms and compare them against the HOSVD and the STHOSVD algorithms. We make the following assumptions. First, we assume that the tensors are dense and our implementations take no advantage of their structure. Second, we assume that the target ranks in each dimension are sufficiently small, i.e., $r_j \ll I_j$ so that we can neglect the computational cost of the QR factorization and the truncation steps of the RandSVD algorithm. Third, we assume that the random matrices used in the algorithms are standard Gaussian random matrices. If other distributions are used, the computational cost may be lower. Finally, for the STHOSVD and R-STHOSVD algorithms,  we assume that the processing order is $\rho = [1,2,\dots,d].$

The computational cost of both HOSVD and STHOSVD was discussed in~\cite{vannieuwenhoven2012new}, and is reproduced in \cref{tab:cost}. In this paper, we also provide an analysis of the computational cost of R-HOSVD and R-STHOSVD, which is summarized in~\cref{tab:cost}.  The table includes the costs for both a general tensor $\ten{X} \in \mb{R}^{I_1 \times I_2 \times \dots \times I_d}$ with target rank $\mat{r} = (r_1,r_2,\dots,r_d)$, as well as for the special case when $\ten{X} \in \mb{R}^{I \times I \times \dots \times I}$ with target rank $\mat{r} = (r,r,\dots,r)$.  For ease of notation, denote the product $\prod_{k=i}^j I_k$ by $I_{i:j}$, and similarly  $\prod_{k=i}^j r_k = r_{i:j}$ for $1 \leq i \leq j \leq d$. 
The dominant costs of each algorithm lie in computing the SVD of the unfoldings (the first term in each summation) and forming the core tensor (the second term in each summation). 

\begin{table}[!ht]\centering
\caption{Computational Cost for the HOSVD, R-HOSVD, STHOSVD, and R-STHOSVD algorithms. The first term in each expression is the cost of computing an SVD of the mode unfoldings, and the second is the cost of forming the core tensor.}
\label{tab:cost}
\begin{tabular}{c | c | c}
Algorithm & Cost for $\ten{X} \in \mb{R}^{I_1 \times \dots \times I_d}$ & Cost for $\ten{X} \in \mb{R}^{I \times \dots \times I}$ \\ 
\hline 
HOSVD & $\mc{O}\left( \sum_{j=1}^d I_j I_{1:d} + \sum_{j=1}^d r_{1:j} I_{j:d} \right)$ & $\mc{O}\left(dI^{d+1}+\sum_{j=1}^d r^jI^{d-j+1}\right)$ \\[8pt]
R-HOSVD & $\mc{O}\left(\sum_{j=1}^d r_j I_{1:d} + \sum_{j=1}^d r_{1:j} I_{j:d} \right)$ & $\mc{O}\left(drI^d+\sum_{j=1}^d r^jI^{d-j+1}\right)$ \\[8pt] \hline
STHOSVD & $\mc{O}\left(\sum_{j=1}^d I_j r_{1:j-1} I_{j:d} + \sum_{j=1}^d r_{1:j} I_{j+1:d}\right)$ & $\mc{O}\left(\sum_{j=1}^d r^{j-1}I^{d-j+2}+ r^jI^{d-j}\right)$ \\[8pt]
R-STHOSVD & $\mc{O}\left(\sum_{j=1}^d r_{1:j} I_{j:d}  + \sum_{j=1}^d r_{1:j} I_{j+1:d}\right)$ & $\mc{O}\left(\sum_{j=1}^d r^{j}I^{d-j+1}+ r^jI^{d-j}\right)$
\end{tabular}
\end{table}

\paragraph{Processing order} 
In all the previous analyses for the R-STHOSVD algorithm, we took the processing order of modes to be $\rho = [1,2, \dots , d]$. The error in the approximation depends on the choice of the processing order;  however, \cref{thm:rst_err} suggests that the worst case error is independent of the processing mode. For this reason, we choose a processing order that minimizes the computational cost.  Since the dominant cost at each step $j$ is a randomized SVD with a cost of $\mc{O}(r_{\rho_1:\rho_j}I_{\rho_j:\rho_d})$, we can minimize this by choosing to process the largest modes first. That is, we process the modes in order of decreasing mode sizes. Note that this is in contrast to the approach taken by \cite{vannieuwenhoven2012new} for the STHOSVD, in that they process in the order of increasing mode sizes to minimize the cost of the standard SVD in each step.

\section{Adaptive Randomized Tensor Decompositions}\label{sec:adapt}
In the algorithms described in the previous section, we had to assume prior knowledge of the target rank. This knowledge may not be available, or may be difficult to estimate in practice. Given a tensor $\ten{X}$, it is often desirable to produce a decomposition $\tenh{X}$ that satisfies 
\[ \| \ten{X} - \tenh{X} \|_F \leq \varepsilon \|\ten{X}\|_F,\]
where $0 < \varepsilon < 1$ is a user-defined parameter. Note that there may not be a unique tensor $\tenh{X}$ that satisfies this inequality, but it is desirable to find a tensor with a small multirank that does satisfy this inequality.  We first explain the adaptive randomized algorithm to find a low-rank matrix approximation, and explain how this can be extended to the tensor case.

Several adaptive randomized range finders have been proposed in the literature~\cite{halko2011finding,gu2016efficient}. Given a matrix $\mat{X}$ and a tolerance $\varepsilon > 0$, the goal is to find a matrix $\mat{Q}$ with orthonormal columns that satisfies 
\begin{equation}\label{eqn:tolerance} \| \mat{X} - \mat{QQ}^\top \mat{X}\| \leq \varepsilon \|\mat{X}\|. \end{equation}  
The number of columns of $\mat{Q}$ is taken to be the rank of the low rank approximation. The adaptive algorithms begin with a small number of columns of the random matrix $\mat\Omega$ to estimate the range $\mat{Q}$ and then sequentially increase the number of columns of $\mat\Omega$ until the matrix $\mat{Q}$ satisfies \cref{eqn:tolerance}.  In our paper, we use a version of the adaptive randomized range finding algorithm first proposed by \cite{martinsson2016randomized} and refined in \cite{gu2016efficient}. We denote the result of this algorithm as $\mat{Q} =  \text{AdaptRangeFinder}(\mat{X},\varepsilon,b)$, where $\mat{X}$ is the matrix to be approximated, $\varepsilon$ is the requested relative error tolerance, and $b$ is a blocking integer to determine how many columns of $\mat\Omega$ to draw at a time.

\paragraph{Adaptive R-HOSVD} We now explain how the adaptive range finder can be used for computing tensor factorizations. For the R-HOSVD, we  apply this adaptive matrix algorithm to each mode unfolding $\mat{X}_{(j)}$. This gives a factor matrices $\mat{A}_j$ for each mode $j=1,\dots,d$.  Given some tolerance $\varepsilon$, the approximation error $\|\ten{X}-\tenh{X} \|_F \leq \varepsilon \|\ten{X}\|_F$ can be achieved if we choose the factor matrices $\mat{A}_j$ to satisfy
$$\| \mat{X}_{(j)} - \mat{A}_j\mat{A}_j^\top \mat{X}_{(j)} \|_F = \| \ten{X} \times_{j} (\mat{I} - \mat{A}_j\mat{A}_j^\top) \|_F \leq \varepsilon \|\ten{X} \|_F /\sqrt{d}.$$ 
Thus, we have apportioned an equal amount of error tolerance to each mode unfolding. Combined with \cref{lem:proj}, this ensures that an overall relative error $\varepsilon$ is achieved. This approach is summarized in \cref{alg:adapt_rhosvd}. Suppose we want a more flexible approach, in which a different tolerance  $\epsilon_j$ is chosen for mode $j=1,\dots,d$. This may be necessary, if we know in advance that we want to avoid compressing along some modes. In general, we may use any sequence $\epsilon_j$, so long as it satisfies $(\sum_{j=1}^d\epsilon_j^2) = \varepsilon^2$. Indeed, setting $\epsilon_j = 0$ for selected modes ensures that no compression is performed across those modes.  Note that the choice $\epsilon_j = \varepsilon/\sqrt{d}$ for $j=1,\dots,d$ automatically satisfies this equality.

\begin{algorithm}[!ht]
\begin{algorithmic}[1]
\REQUIRE $d$-mode tensor $\ten{X} \in \mb{R}^{I_1 \times I_2 \times \dots \times I_d}$, tolerance $\varepsilon \geq 0$, blocking integer $b \geq 1$
\ENSURE $\tenh{X} = [\ten{G};\mat{A}_1,\dots,\mat{A}_d]$
\FOR {$j=1:d$}
\STATE $\mat{A}_j =$ AdaptRangeFinder($\mat{X}_{(j)},\frac{\varepsilon}{\sqrt{d}},b$)
\ENDFOR
\STATE Form $\ten{G} = \ten{X} \bigtimes_{j=1}^d \mat{A}_j^\top$
\end{algorithmic}
\caption{Adaptive R-HOSVD}
\label{alg:adapt_rhosvd}
\end{algorithm}

\paragraph{Adaptive R-STHOSVD} The same approach can be extended to the R-STHOSVD algorithm. We define the intermediate tensors $\ten{X}^{(j)} = \ten{X}\bigtimes_{i=1}^j\mat{A}_i\mat{A}_i^\top$ for $j=1,\dots,d$ and $\ten{X}^{(0)} = \ten{X}$. Analogously, we define the intermediate core tensor $\ten{G}^{(j)} = \ten{X}\bigtimes_{i=1}^j\mat{A}_i^\top$ with $\ten{G}^{(0)} = \ten{G}$. Furthermore, we choose the processing order $\rho = [1,2,\dots,d]$ for simplicity. In this case, we choose the factor matrices $\mat{A}_j$ in order to ensure that the successive iterates satisfy 
\[ \|\ten{X}^{(j-1)} - \ten{X}^{(j)}\|_F = \|\ten{G}^{(j-1)} \bigtimes_{i=1}^{j-1} \mat{A}_j \times_j(\mat{I}-\mat{A}_j\mat{A}_j^\top)\|_F \leq \frac{\varepsilon }{\sqrt{d}}\|\ten{X}\|_F, \quad j=1,\dots,d.\]
 Applying the first part of \cref{lem:proj}, we obtain 
\[ \|\ten{X} - \ten{X}^{(d)}\|_F^2 = \sum_{j=1}^d\|\ten{X}^{(j-1)} - \ten{X}^{(j)}\|_F^2 \leq \varepsilon^2 \|\ten{X}\|_F^2.\]
The details are provided in \cref{alg:adapt_sthosvd}, which uses a general processing order. As with R-HOSVD, we may elect to use the same error tolerance $\varepsilon/\sqrt{d}$ for each mode unfolding, or use a different tolerance $\epsilon_j$ for iterate $j=1,\dots,d$.

\begin{algorithm}[!ht]
\begin{algorithmic}[1]
\REQUIRE $d$-mode tensor $\ten{X} \in \mb{R}^{I_1 \times I_2 \times \dots \times I_d}$, processing order $\rho$, tolerance $\varepsilon \geq 0$, blocking integer $b \geq 1$
\ENSURE $\tenh{X} = [\ten{G};\mat{A}_1,\dots,\mat{A}_d]$
\STATE Set $\ten{G} \leftarrow \ten{X}$
\FOR {$j=1:d$}
\STATE $\mat{A}_{\rho_j} = \text{AdaptRangeFinder}(\mat{G}_{(\rho_j)},\frac{\varepsilon}{\sqrt{d}},b)$
\STATE Update $\ten{G}_{(\rho_j)} \leftarrow \mat{A}_{\rho_j}^\top \mat{G}_{(\rho_j)}$
\ENDFOR
\STATE $\ten{G} \leftarrow \mat{G}_{(\rho_d)}$, in tensor format
\end{algorithmic}
\caption{Adaptive R-STHOSVD}
\label{alg:adapt_sthosvd}
\end{algorithm}

\section{Structure-preserving decompositions} \label{sec:sparse}
In this section, we are interested in computing a low-rank decomposition in the Tucker format in which the core tensor $\ten{G} \in \mb{R}^{r_1\times \dots \times r_d}$ has entries that are explicitly taken from the original tensor $\ten{X} \in \mb{R}^{I_1\times \dots \times I_d}$. That is, $\ten{X} \approx \ten{G} \bigtimes_{j=1}^d \mat{A}_j$
where $\mat{A}_j \in \mb{R}^{I_j \times r_j}$ with $j=1,\dots,d$ are the factor matrices (that do not necessarily have orthonormal columns). We call such a decomposition {\em structure-preserving} since the core  tensor retains favorable properties (e.g., sparsity, nonnegativity, binary or integer counts) of  the original tensor. This generalizes a related decomposition proposed for matrices in \cite{cheng2005compression}. Related to the structure-preserving decompositions, prior work includes a higher-order interpolatory decomposition \cite{saibaba2017hoid,drineas2007randomized,mahoney2008tensor} of the form 
\[ \ten{X} \approx \ten{G} \bigtimes_{j=1}^d \mat{C}_j  \qquad \ten{G} = \ten{X} \bigtimes_{j=1}^d \mat{C}_j^\dagger, \]
where the matrices $\{\mat{C}_j\}_{j=1}^d$ have entries from the original tensor (specifically, columns selected from the appropriate mode-unfoldings), and ${}^\dagger$ represents the Moore-Penrose pseudoinverse.

\subsection{Algorithm}
We first explain our algorithm for the matrix $\mat{X}$. We compute a basis $\mat{Q}$ using the randomized range finding algorithm. Instead of computing the low-rank approximation $\mat{QQ}^\top \mat{X}$,  as was done in \cref{alg:randsvd}, we first identify a set of well-conditioned rows of $\mat{Q}$. This is implemented as a  selection operator denoted by the matrix $\mat{P}$,  which contains columns from the identity matrix. We then use the low-rank representation $$\mat{X} \approx \mat{Q}(\mat{P}^\top\mat{Q})^{-1}\mat{P}^\top \mat{X} = \mat{A}\widehat{\mat{X}},$$
where $\mat{A} = \mat{Q}(\mat{P}^\top\mat{Q})^{-1}$ and $\widehat{\mat{X}} = \mat{P}^\top \mat{X}.$ 
We see that the matrix $\mat{A}$ does not have orthonormal columns, but is well-conditioned, and that the matrix $\widehat{\mat{X}}$ contains rows from the matrix $\mat{X}$ as determined by the selection operator $\mat{P}$. The selection operator can be determined in a variety of ways: using the discrete empirical interpolation method~\cite{sorensen2016deim}, pivoted QR factorization~\cite{drmac2016new}, or strong rank-revealing QR (sRRQR) factorization~\cite{gu1996efficient,drmac2018discrete} (which is used in this paper).

The idea behind our algorithms is the following: at each step, given the core tensor $\ten{G}$ we first unfold this tensor and use a randomized range finder on the unfolding to obtain a basis $\mat{Q}_j$. Then, we use the sRRQR algorithm to determine the selection operator that identifies well-conditioned rows of $\mat{Q}_j$ to determine the core tensor for the next step. The details of the algorithm are provided in \cref{alg:sparse}. A few things are worth noting. First, the core tensor at each step contains elements from the original tensors, so that the final core tensor has entries from the original tensor; this makes the algorithm structure preserving. Second, in contrast to \cref{alg:rhosvd,alg:rsthosvd}, the factor matrices do not have orthonormal columns; if orthonormal columns are desired, a postprocessing step can be performed (a thin-QR factorization of each factor matrix to obtain the basis, followed by an aggregation step in which the core tensor is multiplied with all the triangular factors). Third, the resulting tensor is of rank-$(r_1+p,\dots,r_d+p)$, which is also in contrast to other algorithms that produce decompositions of the rank $(r_1,\dots,r_d)$. Once again, these factors can be recompressed using a postprocessing step; see, for example,~\cite{kressner2017recompression}.

\begin{algorithm}[!ht]
\begin{algorithmic}[1]
\REQUIRE $d$-mode tensor $\ten{X} \in \mb{R}^{I_1 \times I_2 \times \dots \times I_d}$, target rank vector $\mat{r} \in \mb{N}^d$, \\
$\quad$ Oversampling parameter $p\geq 0$  such that $r_j+p < \min \{I_j,\prod_{i\neq j} I_i\}$ for $j=1,\dots,d$, processing order $\rho$
\ENSURE $\tenh{X} = [\ten{G};\mat{A}_1,\dots,\mat{A}_d]$
\STATE Set $\ten{G} = \ten{X}$
\FOR{$j=1:d$}
\STATE Draw Gaussian matrix $\mat{\Omega}_{\rho_j} \in \mb{R}^{\prod_{\rho_i \neq \rho_j} I_{\rho_i} \times (r_{\rho_j}+p)}$
\STATE Form $\mat{Y} \leftarrow \mat{G}_{(\rho_j)} \mat{\Omega}_{\rho_j}$
\STATE Thin QR factorization $\mat{Y} = \mat{Q}_{\rho_j}\mat{R}$
\STATE Use strong RRQR on $\mat{Q}_{\rho_j}^\top$ with parameter $\eta = 2$ 
\[ \mat{Q}_{\rho_j}^\top \bmat{\mat{S}_1 & \mat{S}_2} = \mat{Z} \bmat{ \mat{R}_{11} & \mat{R}_{12}}. \]  
Let $\mat{P}_{\rho_j}  = \mat{S}_1 \in \mb{R}^{I_{\rho_j} \times r_{\rho_j}}$ which contains the columns from the identity matrix. 
\STATE Form $\mat{A}_{\rho_j} = \mat{Q}_{\rho_j}(\mat{P}_{\rho_j}^\top \mat{Q}_{\rho_j})^{-1}$
\STATE Update $\mat{G}_{(\rho_j)} \leftarrow \mat{P}_{\rho_j}^\top \mat{G}_{(\rho_j)}$
\ENDFOR
\STATE Set $\ten{G} = \mat{G}_{(\rho_d)}$, in tensor format
\end{algorithmic}
\caption{Structure-preserving STHOSVD (SP-STHOSVD)}
\label{alg:sparse}
\end{algorithm}

\cref{alg:sparse} is particularly beneficial for sparse tensors. Although sparse tensors can be efficiently stored in an appropriate tensor format (e.g.,~\cite{bader2007efficient}), a straightforward application of either \cref{alg:rhosvd} or \cref{alg:rsthosvd} produces dense intermediate tensors that may be prohibitively expensive to store, even though the final decomposition may be economical in terms of storage costs. On the other hand, in \cref{alg:sparse}, each intermediate core tensor is sparse, and only contains entries from the original tensor. Therefore, the intermediary core tensors can be efficiently stored in the same sparse tensor format. Besides the savings in memory, storing in sparse tensor format is efficient for computational reasons since tensor and matrix products are cheaper to compute.

\paragraph{Computational Cost} For simplicity, we assume a processing order of $\rho = [1,2,\dots, d]$. The two dominant costs of \cref{alg:sparse} for each mode are obtaining the basis $\mat{Q}_j$ and computing an sRRQR to determine $\mat{P}_j$. Let $\mathsf{nnz}(\ten{G}^{(j)})$ denote the number of nonzeros in the core tensor at step $j$. Letting  $\ell_j = r_j+p$, the cost of forming the product of the (unfolded) core tensor with a random matrix $\mat{\Omega}_j$, over all $d$ modes is $\mc{O}(\sum_{j=1}^d \mathsf{nnz}(\ten{G}^{(j)})\ell_j)$.  Computing an sRRQR of an $I_j \times \ell_j$ matrix with parameter $\eta = 2$ (parameter was called $f$ in ) costs $\mc{O}(I_j \ell_j^2)$ per mode. Combining both the dominant costs gives a total cost of $\mc{O}\left(\sum_{j=1}^d \mathsf{nnz}(\ten{G}^{(j)})\ell_j +\sum_{j=1}^d I_j \ell_j^2\right).$
This analysis shows that the computational cost of SP-STHOSVD is significantly smaller than the other algorithms presented thus far, particularly with a sparse tensor. Even when the original tensor is dense, there is a savings in computational cost compared to STHOSVD since a full SVD is not computed, and compared to R-STHOSVD since the core tensor $\ten{G}^{(j)}$ is only multiplied once per iteration. For computational reasons, we still use the processing order in \cref{ssec:comp}.

\subsection{Error Analysis}

We now present the error analysis for \cref{alg:sparse}. There are two major difficulties here in extending the proofs of \cref{lem:mode_err,lem:st_mode_err}. First, we have to work with an oblique projector $\mat{\Pi}_j = \mat{Q}_j(\mat{P}_j^\top\mat{Q}_j)^{-1}\mat{P}_j^\top$, whereas in the previous analysis we used an orthogonal projector. As a consequence, we can no longer use the Pythagoras theorem to obtain~\cref{lem:proj}; instead we have to employ the triangle inequality, resulting in a weaker bound. Second, we have to work with the factor matrices $\mat{A}_j$ which no longer have orthonormal columns. We are able to show the following error bound.
\begin{theorem}
Let $\tenh{X} = [\ten{G}; \mat{A}_1,\dots,\mat{A}_d]$ be the output of~\cref{alg:sparse} with inputs target rank $\mat{r} = (r_1,\dots,r_d)$ and oversampling parameter $p \geq 2$ such that $p$ satisfies $r_j + p < \min\{I_j,\prod_{i\neq j} I_i\}$ for $j=1,\dots,d$.
 Furthermore, assume a processing order of $\rho = [1,2,\dots,d]$, and let $\eta = 2$ be the sRRQR parameter. Then, the expected approximation error is 
\vspace{-.4cm}
\begin{equation*}
\begin{aligned}
\mb{E}_{\{\mat{\Omega}_k\}_{k=1}^d} \|\ten{X}-\tenh{X} \|_F \leq & \> \sum_{j=1}^d \left( \prod_{k=1}^j g(I_k,\ell_k) \right) f_p(r_j) \Delta_j(\ten{X}) \\
\leq & \> \sum_{j=1}^d \left( \prod_{k=1}^j g(I_k,\ell_k) \right) f_p(r_j)  \| \ten{X} - \tenh{X}_\text{opt} \|_F.
\end{aligned}
\end{equation*}
where $g(I,r) = \sqrt{1+4r(I -r)}$, and $f_p(r) = \sqrt{1+\frac{r}{p-1}}$. Furthermore, the matrices $\{\mat{A}_j\}_{j=1}^d$ each contain an $\ell_j \times \ell_j$ identity matrix and 
$$1 \leq  \|\mat{A}_j\|_2 \leq g(I_j, \ell_j) \qquad j=1,\dots,d.$$
\label{thm:pass_error}
\end{theorem}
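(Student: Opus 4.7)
The plan is to follow the structure of the proof of \cref{thm:rst_err} but replace the Pythagoras decomposition (which required orthogonal projectors) with a telescoping plus triangle inequality, and carry two complications through the bookkeeping: the oblique projector $\mat\Pi_j = \mat{Q}_j(\mat{P}_j^\top\mat{Q}_j)^{-1}\mat{P}_j^\top$, and the fact that the factor matrices $\mat{A}_j$ are no longer isometries. I would begin by introducing the intermediate tensors $\tenh{X}^{(j)} = \ten{G}^{(j)}\bigtimes_{i=1}^{j}\mat{A}_i$ with $\ten{G}^{(j)} = \ten{X}\bigtimes_{i=1}^{j}\mat{P}_i^\top$, so that $\tenh{X}^{(0)}=\ten{X}$ and $\tenh{X}^{(d)}=\tenh{X}$. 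A direct manipulation gives the key identity
\[
\tenh{X}^{(j-1)} - \tenh{X}^{(j)} \;=\; \bigl(\ten{G}^{(j-1)}\times_j(\mat{I}-\mat\Pi_j)\bigr)\bigtimes_{i=1}^{j-1}\mat{A}_i,
\]
and then the triangle inequality gives $\|\ten{X}-\tenh{X}\|_F \le \sum_{j=1}^d\|\tenh{X}^{(j-1)} - \tenh{X}^{(j)}\|_F$. This is the substitute for \cref{lem:proj}; it is weaker by a constant factor but works for non-orthogonal projectors.

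Next I would unfold each summand along mode $j$ using \cref{eqn:kron}, yielding $(\mat{I}-\mat\Pi_j)\,\mat\Gamma_j\,\mat{M}_j^\top$, where $\mat\Gamma_j \equiv \mat{G}^{(j-1)}_{(j)}$ and $\mat{M}_j$ is the Kronecker product of identities (for modes $>j$) and the $\mat{A}_i$'s (for $i<j$). Since $\|\mat{M}_j\|_2 = \prod_{i<j}\|\mat{A}_i\|_2$, I need a uniform bound on $\|\mat{A}_i\|_2$; this is where the sRRQR guarantee comes in. Applied to $\mat{Q}_j^\top$ with $\eta=2$, strong RRQR ensures $\|(\mat{P}_j^\top\mat{Q}_j)^{-1}\|_2 \le \sqrt{1+4\ell_j(I_j-\ell_j)} = g(I_j,\ell_j)$ deterministically, so $\|\mat{A}_j\|_2 = \|\mat{Q}_j(\mat{P}_j^\top\mat{Q}_j)^{-1}\|_2 \le g(I_j,\ell_j)$ and $\|\mat\Pi_j\|_2 \le g(I_j,\ell_j)$. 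Using the oblique-projector identity $\mat{I}-\mat\Pi_j = (\mat{I}-\mat\Pi_j)(\mat{I}-\mat{Q}_j\mat{Q}_j^\top)$ (which follows from $\mat\Pi_j\mat{Q}_j=\mat{Q}_j$) together with $\|\mat{I}-\mat\Pi_j\|_2=\|\mat\Pi_j\|_2\le g(I_j,\ell_j)$ lets me peel off one more $g$ factor:
\[
\|\tenh{X}^{(j-1)}-\tenh{X}^{(j)}\|_F \;\le\; \Bigl(\prod_{k=1}^{j} g(I_k,\ell_k)\Bigr)\,\|(\mat{I}-\mat{Q}_j\mat{Q}_j^\top)\mat\Gamma_j\|_F.
\]
The crucial structural point here is that by bookkeeping the error in the algorithm's actual iterate $\ten{G}^{(j-1)}$ (rather than in $\ten{X}\bigtimes_{i<j}\mat\Pi_i$), the residual gets measured on $\mat\Gamma_j$, exactly the matrix to which the randomized range finder was applied with Gaussian $\mat\Omega_j$.

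With this reduction in hand, the probabilistic step is the same as in \cref{thm:rst_err}. Conditioning on $\{\mat\Omega_k\}_{k<j}$ (so that $\mat\Gamma_j$ is fixed) and applying \cref{thm:randsvd_err} to $\mat\Gamma_j$ yields $\mb{E}_{\mat\Omega_j}\|(\mat{I}-\mat{Q}_j\mat{Q}_j^\top)\mat\Gamma_j\|_F^2 \le f_p(r_j)^2\sum_{i>r_j}\sigma_i^2(\mat\Gamma_j)$. Because $\mat\Gamma_j = \mat{X}_{(j)}\mat{Z}_j$ with $\mat{Z}_j$ a Kronecker product of column-selection matrices and identities (hence with orthonormal columns), the Loewner-ordering argument in \cref{eqn:B_svalues} carries over verbatim to give $\sum_{i>r_j}\sigma_i^2(\mat\Gamma_j) \le \Delta_j^2(\ten{X})$. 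Taking iterated expectations, pulling the deterministic $\prod g(I_k,\ell_k)$ out of the expectation, and applying H\"older's inequality term-by-term to the sum produces the first claimed bound; the second follows from \cref{eqn:xbestdelta}. The norm statement about $\mat{A}_j$ is a separate, short argument: the upper bound was already established; the lower bound $\|\mat{A}_j\|_2\ge 1$ follows because $\mat{P}_j^\top\mat{A}_j = \mat{P}_j^\top\mat{Q}_j(\mat{P}_j^\top\mat{Q}_j)^{-1} = \mat{I}_{\ell_j}$, so $\mat{A}_j$ contains an $\ell_j\times\ell_j$ identity submatrix in the rows selected by $\mat{P}_j$.

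The main obstacle, as noted above, is the oblique nature of $\mat\Pi_j$: the clean Pythagorean sum used in \cref{thm:rst_err} fails, so one must aggregate errors via the triangle inequality and absorb the price as the product $\prod_{k=1}^{j}g(I_k,\ell_k)$. A secondary subtlety is the choice of which telescoping iterates to use. One might be tempted to set $\tenh{X}^{(j)} = \ten{X}\bigtimes_{i\le j}\mat\Pi_i$, but then the randomized range finder would have to be related to $\mat{X}_{(j)}$ rather than $\mat\Gamma_j$, which breaks the direct invocation of \cref{thm:randsvd_err}; decomposing instead through $\ten{G}^{(j)}$ as above is what makes the Gaussian appear on precisely the matrix the algorithm sketches.
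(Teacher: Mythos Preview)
Your proposal is correct and follows essentially the same approach as the paper's proof: the same telescoping iterates $\tenh{X}^{(j)}=\ten{G}^{(j)}\bigtimes_{i\le j}\mat{A}_i$, the same oblique-projector identity $(\mat{I}-\mat\Pi_j)=(\mat{I}-\mat\Pi_j)(\mat{I}-\mat{Q}_j\mat{Q}_j^\top)$ with $\|\mat{I}-\mat\Pi_j\|_2=\|\mat\Pi_j\|_2\le g(I_j,\ell_j)$, the same sRRQR bound on $\|\mat{A}_j\|_2$, and the same Loewner/submatrix argument to pass from $\sigma_i(\mat\Gamma_j)$ to $\Delta_j(\ten{X})$. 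The only cosmetic differences are that the paper invokes \cref{eqn:nonsquared_err} directly on the conditional expectation whereas you use the squared bound plus H\"older, and the paper briefly justifies $\mat\Pi_j\notin\{\mat 0,\mat I\}$ before using $\|\mat I-\mat\Pi_j\|_2=\|\mat\Pi_j\|_2$, which you should also note.
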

\begin{proof}
First, consider the factor matrices $\mat{A}_j = \mat{Q}_j (\mat{P}_j^\top \mat{Q}_j)^{-1}$ for $j=1,\dots,d$. Since $\mat{P}_j$ contains columns from the identity matrix, it is easy to verify that $\mat{A}_j$ contains the identity matrix as its submatrix. The lower bound on $\|\mat{A}_j\|_2$ follows immediately from this fact. For the upper bound, since $\mat{Q}_j$ has orthonormal columns almost surely,  
\begin{equation}\label{eqn:aj} \| \mat{A}_j\|_2 = \|(\mat{P}_j^\top \mat{Q}_j)^{-1} \|_2 \leq g(I_j,\ell_j).\end{equation}
The last step follows from \cite[Lemma 2.1]{drmac2018discrete}, where $\eta = 2$ is used as the tuning parameter for the sRRQR algorithm. 

Next, let $\ten{G}^{(j)} = \ten{X} \bigtimes_{i=1}^j \mat{P}_i^\top$ denote the partially truncated core tensor after the $j$-th step of~\cref{alg:sparse}.  Also let $\tenh{X}^{(j)} = \ten{G}^{(j)} \bigtimes_{i=1}^j\mat{A}_i$ be the $j$-th partial approximation of $\ten{X}$. Then, by the triangle inequality and the linearity of expectations, we have
\begin{equation} \label{eqn:sparse_sum}
\mb{E}_{\{\mat{\Omega}_k\}_{k=1}^d} \|\ten{X}-\tenh{X} \|_F \leq  \sum_{j=1}^d \mb{E}_{\{\mat{\Omega}_k\}_{k=1}^d}  \|\tenh{X}^{(j-1)} - \tenh{X}^{(j)} \|_F .
\vspace{-.1cm}
\end{equation}
Consider the term $\|\ten{\hat{X}}^{(j-1)}-\ten{\hat{X}}^{(j)} \|_F.$ We can simplify $\ten{\hat{X}}^{(j)}$ as 
\[\begin{aligned}
\ten{\hat{X}}^{(j)} = & \> \ten{G}^{(j)} \bigtimes_{i=1}^j \mat{A}_i = \left(\ten{G}^{(j-1)} \times_j\mat{P}_j^\top \right) \bigtimes_{i=1}^j \mat{A}_i\\
= & \> \ten{G}^{(j-1)}  \bigtimes_{i=1}^{j-1} \mat{A}_i \times_j\mat{A}_j\mat{P}_j^\top  = \ten{G}^{(j-1)}  \bigtimes_{i=1}^{j-1} \mat{A}_i \times_j\mat{\Pi}_j.
\end{aligned}  \]
Therefore, $\ten{\hat{X}}^{(j-1)}-\ten{\hat{X}}^{(j)} = \ten{G}^{(j-1)}  \bigtimes_{i=1}^{j-1} \mat{A}_i \times_j (\mat{I} - \mat\Pi_j)$. Repeated use of the submultiplicativity $\|\mat{MN}\|_F \leq \|\mat{M}\|_F \|\mat{N}\|_2$,
\begin{equation}\label{eqn:submult}
\vspace{-.1cm}
\|\ten{\hat{X}}^{(j)}-\ten{\hat{X}}^{(j-1)} \|_F \leq \> \|\ten{G}^{(j-1)} \times_j(\mat{I}-\mat{\Pi}_j) \|_F \prod_{i=1}^{j-1} \| \mat{A}_i \|_2. 
\end{equation}
 Now, observe that $\mat{\Pi}_j\mat{Q}_j\mat{Q}_j^\top = \mat{Q}_j\mat{Q}_j^\top$, implying that $\mat{I}-\mat{\Pi}_j = (\mat{I}-\mat{\Pi}_j)(\mat{I}-\mat{Q}_j\mat{Q}_j^\top)$.  Therefore, once again using submultiplicativity
 \begin{equation}\label{eqn:inter}
     \begin{aligned}
         \|\ten{G}^{(j-1)} \times_j(\mat{I}-\mat{\Pi}_j) \|_F =  & \>\|\ten{G}^{(j-1)} \times_j(\mat{I}-\mat{\Pi}_j) (\mat{I}-\mat{Q}_j\mat{Q}_j^\top) \|_F \\
         \leq &\>  \|\ten{G}^{(j-1)} \times_j (\mat{I}-\mat{Q}_j\mat{Q}_j^\top) \|_F\|\mat{I}-\mat{\Pi}_j\|_2.
     \end{aligned}
 \end{equation}

 We note that $\mat\Pi_j \neq \mat{I}$ since $\rank(\mat\Pi_j) \leq \rank(\mat{Q}_j) = r_j + p < I_j$, and $\mat{\Pi}_j \neq \mat{0}$ since $\mat{Q}_j$ has orthonormal columns (almost surely) and $\mat{P}_j^\top \mat{Q}_j$ is invertible. Therefore, using~\cite{szyld2006many} $\|\mat{I} - \mat\Pi_j\|_2 = \|\mat{\Pi}_j\|_2$. Once again using \cite[Lemma 2.1]{drmac2018discrete}
$$\|\mat{I} - \mat\Pi_j\|_2 = \|\mat{\Pi}_j\|_2 = \| (\mat{P}_j^\top\mat{Q}_j)^{-1} \|_2 \leq g(I_j,\ell_j). $$
Combining this inequality with \cref{eqn:aj,eqn:inter,eqn:submult}, we have
\[ \|\tenh{X}^{(j)} - \tenh{X}^{(j-1)} \|_F \leq  
 \|\ten{G}^{(j-1)} \times_j (\mat{I}-\mat{Q}_j\mat{Q}_j^\top) \|_F \prod_{i=1}^{j} g(I_i,\ell_i). \]
Taking expectations, and using the independence of the random matrices, we obtain 
\[ \begin{aligned}
\mb{E}_{\{\mat{\Omega}_k\}_{k=1}^d}  \|\tenh{X}^{(j)} - \tenh{X}^{(j-1)} \|_F = & \> \mb{E}_{\{\mat{\Omega}_k\}_{k=1}^{j-1}} \mb{E}_{\mat{\Omega}_j}\|\ten{G}^{(j-1)} \times_j (\mat{I}-\mat{Q}_j\mat{Q}_j^\top) \|_F \prod_{i=1}^{j} g(I_i,\ell_i) \\
= & \> \mb{E}_{\{\mat{\Omega}_k\}_{k=1}^{j-1}} \mb{E}_{\mat{\Omega}_j} \|(\mat{I}-\mat{Q}_j\mat{Q}_j^\top)\mat{G}^{(j-1)}_{(j)}\|_F \prod_{i=1}^{j} g(I_i,\ell_i) \\
\leq  & \>\left(\prod_{i=1}^j g (I_i,\ell_i) \right) f_p(r_j) \mb{E}_{\{\mat{\Omega}_k\}_{k=1}^{j-1}}\left(\sum_{i= r_j+1}^{I_j} \sigma_i^2(\mat{G}_{(j)}^{(j-1)})\right)^{1/2}.
\end{aligned}
\]
In the last step, we have used \cref{eqn:nonsquared_err} and kept the random matrices $\{\mat{\Omega}_k\}_{i=1}^{j-1}$ fixed. By construction $\mat{G}_{(j)}^{(j-1)}$ is a submatrix of the mode-unfolding $\mat{X}_{(j)}$. Arguing as in the proof of \cref{thm:rst_err}, we can show  $ \sum_{i= r_j+1}^{I_j} \sigma_i^2(\mat{G}_{(j)}^{(j-1)}) \leq \Delta_j^2(\ten{X})$, for $j=1,\dots,d$
and, therefore, 
\[ \mb{E}_{\{\mat{\Omega}_k\}_{k=1}^d}  \|\tenh{X}^{(j)} - \tenh{X}^{(j-1)} \|_F \leq\left(\prod_{i=1}^j g (I_i,\ell_i) \right) f_p(r_j) \mb{E}_{\{\mat{\Omega}_k\}_{k=1}^{j-1}}\Delta_j(\ten{X}). \]
Plugging this into \cref{eqn:sparse_sum}, and using \cref{eqn:xbestdelta} we get the desired bound.

\end{proof}

In this result, we have assumed the standard processing order $\rho = [1,\dots,d]$. The analysis can be extended to other processing orders, but we omit a detailed statement here. However, note that the upper bound derived here is dependent on the processing order, which is in contrast to the bound in \cref{thm:rst_err}. Although the error bound in \cref{thm:pass_error} can be much higher than \cref{thm:rst_err}, numerical results suggest that the bounds are somewhat pessimistic and, in practice, \cref{alg:sparse} produces accurate decompositions. {Two other features are worth mentioning. First, the core tensor $\ten{G}$ contains $\prod_{j=1}^d\ell_j$ entries from the original tensor $\ten{X}$.}  Second, while each factor matrix may not be orthonormal, they are close to orthonormal and, in fact, explicitly contain the identity matrix as a submatrix.  

\subsection{Variants} One can easily develop several variations of the~\cref{alg:sparse} that may be beneficial in applications.
\begin{description}
\item [1. SP-HOSVD.] In~\cref{alg:sparse}, we handled all the modes sequentially; they can however be handled independently. For each mode $j=1,\dots,d$, we obtain a basis $\mat{Q}_j$ and a selection operator $\mat{P}_j$. The resulting decomposition is of the form$\ten{X} \approx \tenh{X} = \ten{G} \bigtimes_{j=1}^d \mat{A}_j$ where the core tensor  $\ten{G} = \ten{X} \bigtimes_{j=1}^d \mat{P}_j^\top$ and the factor matrices are of the form $\mat{A}_j = \mat{Q}_j(\mat{P}_j^\top \mat{Q}_j)^{-1}$ for $j=1,\dots,d$. The error analysis is similar to \cref{thm:pass_error} and we found that it has the same upper bound.
\item [2. Range finding.] We used a basic version of the randomized range finding algorithm to obtain the matrices $\mat{Q}_j$. Other variations are certainly possible; for example, the adaptive range finding algorithm~\cref{sec:adapt}, randomized subspace iteration~\cite[Algorithm 4.3]{halko2011finding}, or other deterministic or randomized rank-revealing decompositions. 
\item [3. Subset selection.] In \cref{alg:sparse}, we used the sRRQR algorithm for the subset selection step. In practice, this is computationally expensive, and an alternative is to use the Column-Pivoted QR factorization. This algorithm has lower computational cost but is known to fail for certain adversarial cases~\cite{gu1996efficient}. Besides deterministic algorithms for subset selection, there are several randomized techniques available, such as uniform sampling and leverage score sampling that can be used instead of sRRQR. 
\end{description}

\section{Numerical Results} \label{sec:num_results}
In this section, we study  the accuracy and the computational cost of our algorithms on several synthetic and real-world tensors. The main result we wish to verify here is that randomizing the preexisting compression algorithms decreases the computational time while not significantly increasing the error. All results were run on a desktop with a $3.4$ GHz Intel Core i7 processing unit and 16GB memory. We used two tensor packages in \textsc{matlab}, namely Tensor Toolbox \cite{TTB_Sparse} for handling sparse tensors, and Tensorlab \cite{vervliet2016tensorlab} for everything else.

\subsection{Test Problems} 
We briefly describe the four different sources of tensors that we use to validate our algorithms.
\paragraph{1. Hilbert Tensor}
Our first test tensor is a synthetic, super-symmetric tensor (invariant under the permutation of indices), where each entry is defined as 
\begin{equation}\label{eqn:hilbert}
\ten{X}_{i_1i_2 \dots i_d} = \frac{1}{i_1+i_2+\dots + i_d} \qquad 1 \leq i_j \leq I_j, \>j = 1,\dots,d. 
\end{equation}
We call this the {\em Hilbert tensor}, which generalizes the Hilbert matrix ($d=2$).  When $d=5$ and each $I_j = 25$, this tensor has $25^5 = 9,765,625$ nonzero entries. The structure of this tensor is such that the singular values of each mode-unfolding decay rapidly, which indicates that the randomized algorithms proposed in this paper are likely to be accurate.

\paragraph{2. Synthetic Sparse tensor}
For our second example, we construct a three-dimensional sparse tensor $\ten{X} \in \mb{R}^{200 \times 200 \times 200}$ as the sum of outer products as
\begin{equation}\label{eqn:sparse}
    \ten{X} = \sum_{i=1}^{10} \frac{\gamma}{i^2} \, \mat{x}_i \circ \mat{y}_i \circ \mat{z}_i + \sum_{i=11}^{200} \frac{1}{i^2} \, \mat{x}_i \circ \mat{y}_i \circ \mat{z}_i,
\end{equation}
where $\mat{x}_i, \mat{y}_i, \mat{z}_i \in \mb{R}^n$ are sparse vectors for all $i$, and $ \circ $ denotes the outer product. The sparse vectors are all generated using the \verb sprand  command with $5\%$ nonzeros. This results in the tensor $\ten{X}$ having $185,211$ nonzeros total. Furthermore, $\gamma$ is a user-defined parameter which determines the strength of the gap between the first ten terms and the last terms.
\paragraph{3. Olivetti Dataset}
The classification of facial images, or ``tensorfaces'' as popularized by \cite{vasilescu2002multilinear}, has two main steps.  The first is the compression phase, where a higher order SVD is applied to the tensor for decomposition.  The second step is the classification process, in which the decomposed tensor is used to classify new images.  We focus on the first step to efficiently  decompose the tensor of images from the Olivetti dataset \cite{olivettidataset} using the proposed randomized algorithms. This dataset contains $400$ images ($64 \times 64$ pixels) of $40$ people in $10$ different poses.  This set of images can be expressed as a three dimensional tensor $\ten{X} \in \mb{R}^{40 \times 4096 \times 10}$, in which the three modes represent people, pixels, and poses, respectively.

\paragraph{4. FROSTT database}
Our final test problems come from the formidable repository of sparse tensors and tools (FROSTT) database \cite{frosttdataset}. From this database, we choose two representative large, sparse tensors whose features are summarized in \cref{tab:frostt}.
\begin{table}[!ht]
    \centering
    \begin{tabular}{c|c|c|c}
        Original Tensor & Order & Size & Nonzeros  \\
        \hline
        NELL-2  & 3 & $12092 \times 9184 \times 28818$ & $76,879,419$   \\
        Enron  & 4 & $6066 \times 5699 \times 244268 \times 1176$ & $54,202,099$ \\
    \end{tabular}
    \vspace{.5cm}
    
    \begin{tabular}{c|c|c|c}
        Condensed Tensor & Order & Size & Nonzeros  \\
        \hline
        NELL-2  & 3 & $807 \times 613 \times 1922$ & $19,841$   \\
        Enron  & 3 & $405 \times 380 \times 9771$ & $6,131$ \\
    \end{tabular}
    \caption{Summary of sparse tensor examples from the FROSTT database---we include the details for both the full datasets and the condensed datasets used in our experiments.}
    \label{tab:frostt}
\end{table}
The NELL-2 dataset \cite{carlson2010toward} is a portion of the Never Ending Language Learning knowledge base from the ``Read the Web'' project at Carnegie Mellon University. NELL is a machine learning system that relates different entities, creating a three-dimensional dataset whose modes represent entity, relation, and entity.  The Enron dataset \cite{shetty2004enron} contains word counts in emails released during an investigation by the Federal Energy Regulatory Commission.  Here, the modes represent sender, receiver, word, and date, respectively.  

Although our implementation of SP-STHOSVD is capable of handling both full tensors in \cref{tab:frostt}, they are unfortunately too large to compute the approximation error. Then, in order to compute this error, we subsample the tensors first.  This also allows us to compare the performance of our SP-STHOSVD algorithm with others that could not handle the size of the full datasets. For the NELL-2 dataset \cite{carlson2010toward}, we subsample every 15 elements to obtain a tensor $\ten{X} \in \mb{R}^{807 \times 613 \times 1922}$. For the Enron dataset \cite{shetty2004enron}, we also need to subsample, but even with subsampling, the tensor is still too sparse to accurately compute the approximation error. To address this, we also condense the dataset to three dimensions by summing over the fourth mode.  Then to subsample, we take every 15 elements from the first two modes, and every 25 from the third.  This results in a tensor $\ten{X} \in \mb{R}^{405 \times 380 \times 9771}$.

\subsection{Numerical Experiments}
We now describe the experiments performed on the test tensors introduced in the previous subsection. 

\subsubsection{Fixed rank} Our first experiment compares the accuracy of the HOSVD and STHOSVD algorithms with their randomized counterparts, R-HOSVD and R-STHOSVD (\cref{alg:rhosvd,alg:rsthosvd}).  As inputs, we take $\ten{X}$ as defined in \cref{eqn:hilbert} with $d=5$ modes and $I_j = 25$ for $j=1,\dots,d$.  For each algorithm, we use the target rank $(r,r,r,r,r)$,  where $r$ varies from $1$ to $25$, and the same oversampling parameter $p=5$ was used in every mode.  Since this is a super-symmetric tensor, the processing order of modes does not affect the results, so we take the processing order $\rho = [1,2,3,4,5]$.  The relative error is plotted in \cref{fig:hilbert_relerr}, where we can see that the approximation error of all four algorithms is very similar and that the randomized algorithms are highly accurate. 
\begin{figure}[!ht] \centering
\includegraphics[width=.4\textwidth]{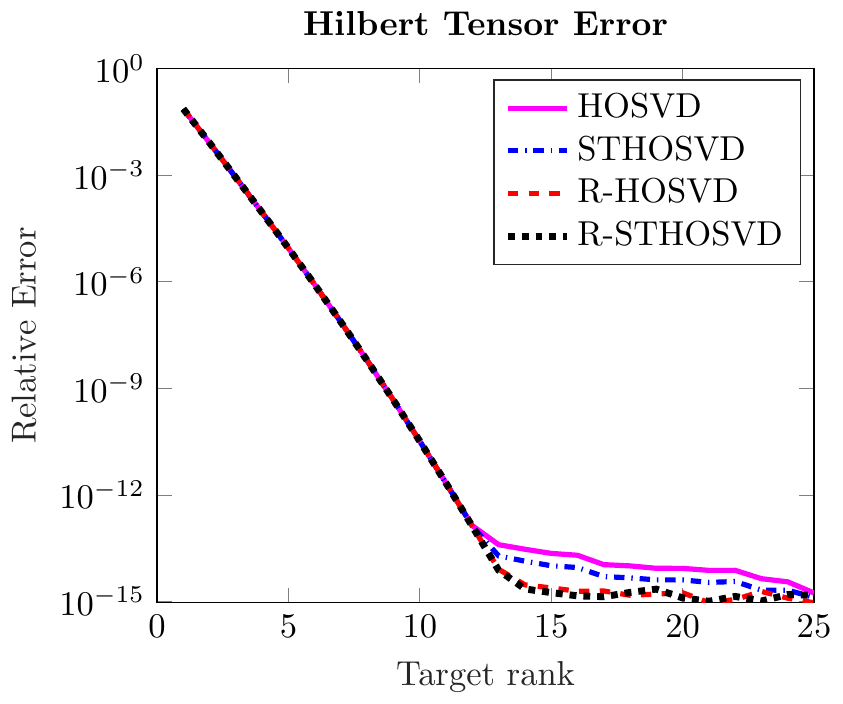}
\includegraphics[width=.4\textwidth]{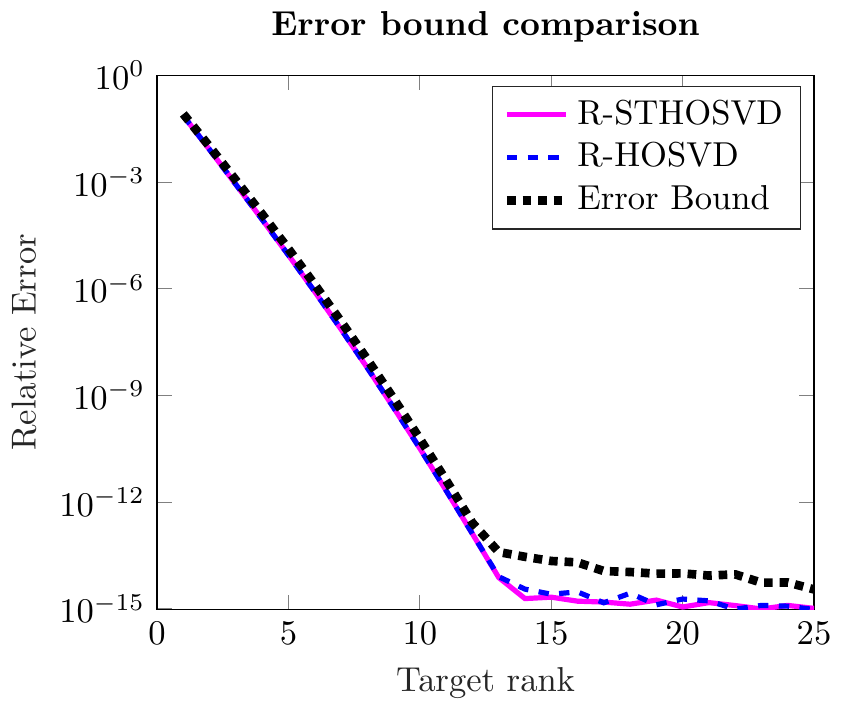}
\caption{{\bf Left:} Relative approximation error for $5$-mode Hilbert tensor $\ten{X} \in \mb{R}^{25 \times 25 \times 25 \times 25 \times 25}$ defined in \cref{eqn:hilbert}, with target rank $(r,r,r,r,r)$ and oversampling parameter $p=5$. {\bf Right:} Actual relative error for $\ten{X}$ from the R-HOSVD and R-STHOSVD algorithms compared to the calculated error bound as the target rank $(r,r,r,r,r)$ increases. Both algorithms use oversampling parameter $p=5$, and R-STHOSVD uses the processing order $\rho = [1,2,3,4,5]$.}
\label{fig:hilbert_relerr}
\end{figure}
The comparison of the cost in \cref{ssec:comp} implies that the proposed randomized algorithms are less expensive. To illustrate this, we report the runtime of the HOSVD, R-HOSVD, STHOSVD, and R-STHOSVD algorithms on $\ten{X}$ as the size of each dimension increases. For inputs, we fixed the target rank to be $(5,5,5,5,5)$, the oversampling parameter as $p=5$, and we used processing order $\rho = [1,2,3,4,5]$ in the sequential algorithms. The runtime in seconds, averaged over three runs, is shown in \cref{tab:runtime}. The theory implies that the randomized algorithms should be a factor of $n/(r+p) = 2.5$ faster than the non-randomized algorithms.  This is evident in our results.  Also, the sequential algorithms are significantly faster than the HOSVD/R-HOSVD algorithms.
\begin{table}[!ht] \centering
\begin{tabular}{c  c | c | c | c | c }
\multicolumn{1}{c}{}  &   \multicolumn{5}{c}{Algorithm}\\
\multicolumn{1}{c}{}  &   & HOSVD & R-HOSVD & STHOSVD & R-STHOSVD  \\ \cline{2-6}
& 25 & $3.0030$ & $1.2609$ & $0.6455$ & $0.2875$   \\ 
$I_j $ & 35 & $14.5255$ & $5.5288$ & $3.1974$ & $1.2090$  \\ 
$1\leq j \leq d$& 45 & $70.0536$ & $17.3744$ & $15.0629$ & $3.5587$ \\
& 50 & $101.4981$ & $27.7725$ & $21.9745$ & $5.5606$
\end{tabular}
\caption{Runtime in seconds of the HOSVD, R-HOSVD, STHOSVD, and R-STHOSVD algorithms on  the Hilbert tensor $\ten{X}$ with $d=5$, averaged over three runs. Each algorithm is run with target rank $(5,5,5,5,5)$, and the randomized algorithms use oversampling parameter $p=5$.  The STHOSVD and R-STHOSVD algorithms use the processing order $\rho = [1,2,3,4,5]$.}
\label{tab:runtime}
\end{table}

We now compare the computed results to the theoretical bounds shown in~\cref{eqn:rhosvd_err,eqn:rst_err} for the R-HOSVD and R-STHOSVD algorithms.  Since the upper bound for both algorithms is the same, we display this bound only once.  Running both algorithms with target rank $(r,r,r,r,r)$ as $r$ increases, oversampling parameter $p=5$, and processing order $\rho = [1,2,3,4,5]$ for R-STHOSVD, we can see in the right-hand plot of~\cref{fig:hilbert_relerr} that they are similar to each other and the computed bound. This closeness shows that the theoretical bounds capture the actual error well.

Next, we apply the randomized algorithms to the Olivetti dataset. The results are described in \cref{fig:faces_relerr}. As all the modes have different dimensions, we only compress the largest (the pixels) to start. The first plot shows the standard algorithms, but there is a noticeable difference in the error for the randomized and standard algorithms.  This can be explained by the fact that the singular values of the data do not decay sufficiently quickly. To fix this, we add one step of subspace iteration (see \cite{halko2011finding} for details) to the randomized SVD, giving the second plot.  Here the difference is almost nonexistent.
\begin{figure}[!ht]\centering
    \includegraphics[scale=.9]{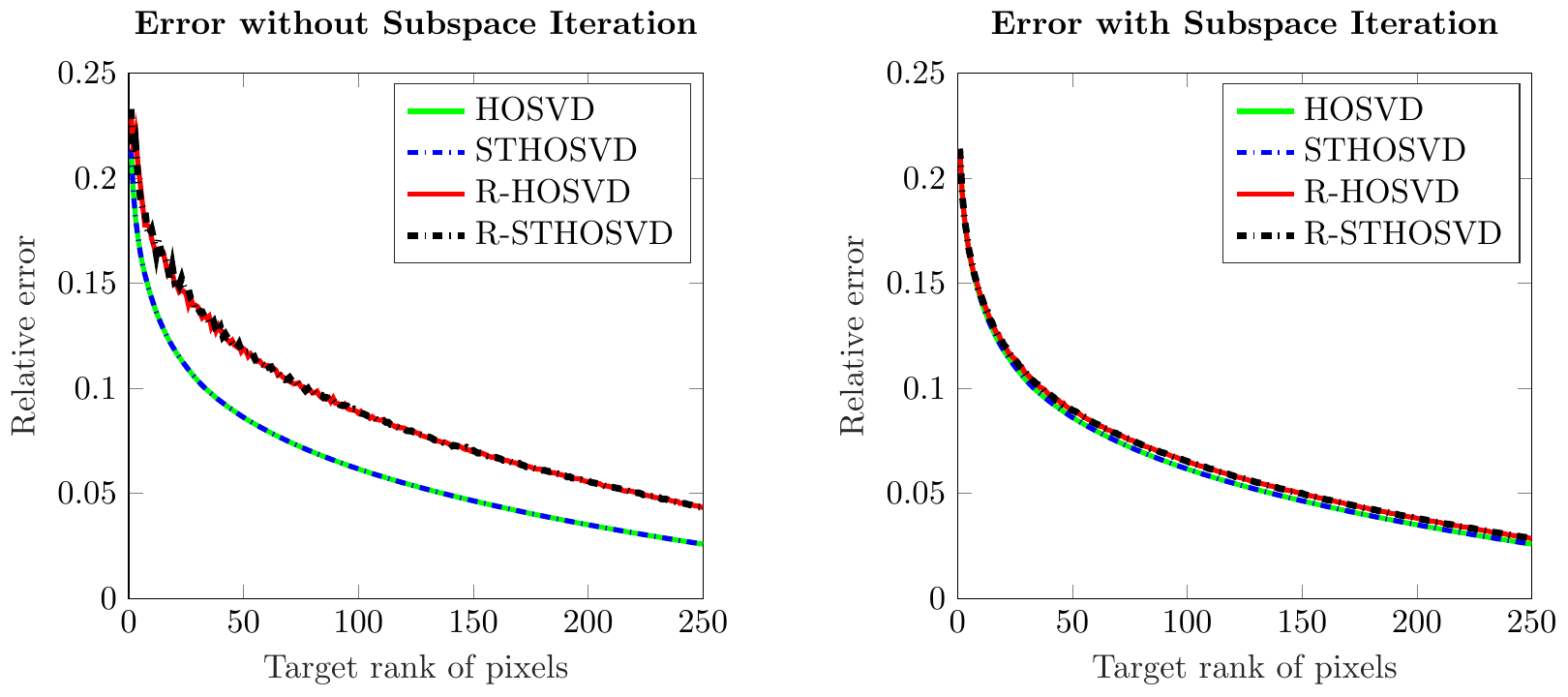}
\caption{Relative error for $\ten{X}$ with and without subspace iteration, compressing just the pixels (mode 2), as the target rank increases. The oversampling parameter was $p = 5$. The right hand plot was run with one step of subspace iteration.} 
\label{fig:faces_relerr}
\end{figure}

Now we consider the relative error if compressing two modes of the Olivetti dataset, namely the people and pixels (modes 1 and 2). We just compare the sequential algorithms here, as they are faster to run. The heat plots in \cref{fig:faces_heatplots} show the results.  We also compare this rank and error pair to that from the adaptive randomized STHOSVD algorithm (\cref{alg:adapt_sthosvd}). For all algorithms, the oversampling parameter was $p = 5$, and the processing order was $\rho = [2,1,*]$. The third mode is not compressed, indicated here by the asterisk.  We will elaborate on the adaptive algorithms in the next section.
\begin{figure}[!ht] \centering
    	\includegraphics[width=\textwidth]{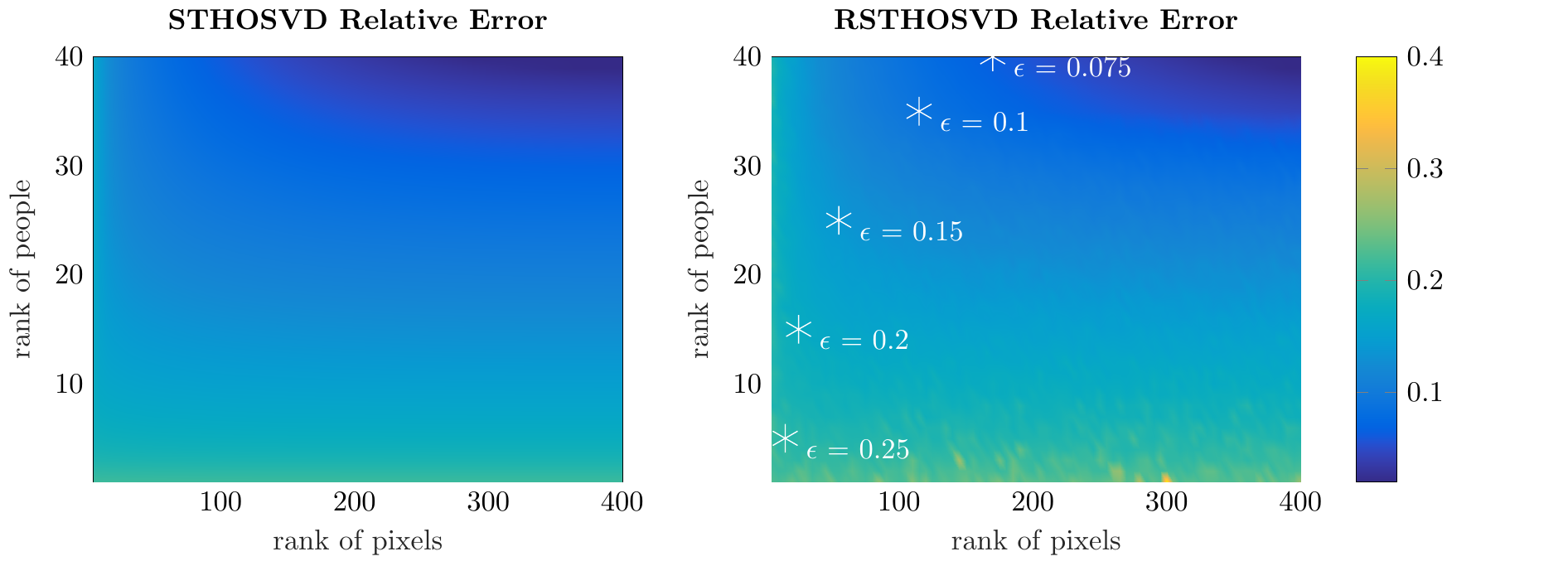}
\caption{Relative error for $\ten{X}$ as the target rank increases using the STHOSVD, compressing the pixels and the people (modes 2 and 1), plotted with the rank given by the Adaptive R-STHOSVD (\cref{alg:adapt_sthosvd}) with the desired relative error tolerance $\epsilon$. The processing order was $\rho = [2,1,*]$, and the oversampling parameter was $p=5$} 
\label{fig:faces_heatplots}
\end{figure}

 \subsubsection{Adaptive algorithms} We now evaluate our adaptive \cref{alg:adapt_rhosvd,alg:adapt_sthosvd} to compare the ranks given by inputing different relative error tolerances as the size of each dimension increases.  Taking the Hilbert tensor $\ten{X}$ defined in \cref{eqn:hilbert} with $d=3$, we give as input relative error tolerance $\epsilon$, processing order $\rho = [1,2,3]$, and blocking integer $b = 1$.  The size of the core tensor obtained from the adaptive STHOSVD algorithm is shown in \cref{tab:hilbert_adapt}.  In the $I_j = 25, \, j=1,2,3$ case, we can see that the error and corresponding rank are close to those shown in the left-hand plot of \cref{fig:hilbert_relerr}.
\begin{table}[!ht] \centering
\begin{tabular}{c  c | c | c | c | c | c }
\multicolumn{1}{c}{}  &   \multicolumn{6}{c}{tolerance $\epsilon$}\\
\multicolumn{1}{c}{}  &   & $10^{-3}$ & $10^{-4}$ & $10^{-5}$ & $10^{-6}$ & $10^{-7}$ \\ \cline{2-7}
& 25 & $(4,4,4)$ & $(5,5,5)$ & $(6,6,6)$ & $(7,7,7)$ & $(8,8,8)$  \\ 
$I$ & 50 & $(5,5,5)$ & $(6,6,6)$ & $(7,7,7)$ & $(8,8,8)$ & $(9,9,9)$ \\ 
$1\leq j \leq d$& 100 & $(5,5,5)$ & $(6,6,6)$ & $(8,8,8)$ & $(9,9,9)$ & $(10,10,10)$
\end{tabular}
\caption{Rank (size of the core tensor) obtained by the adaptive STHOSVD algorithm (\cref{alg:adapt_sthosvd}) with different relative error tolerances $\epsilon$ as the size of each dimension of $\ten{X}$ increase. The inputs are $\epsilon$, block size $b = 1$, and processing order $\rho = [1,2,3]$.}
\label{tab:hilbert_adapt}
\end{table}

Next, we consider the Olivetti dataset.  To compare our randomized algorithms to the standard algorithms, we give a desired relative error tolerance $\epsilon$ to the adaptive R-STHOSVD and find the size of the core tensor.  We then compare the actual error from that rank to the error from STHOSVD run with the same rank, and display the results in \cref{tab:faces_adapt}. The theory implies that we should see a smaller error from the STHOSVD results, and we observe that the error is only slightly smaller. Both algorithms were run with processing order $\rho = [2,1,3]$.

\begin{table}[!ht] \centering
\begin{tabular}{c|c|c|c}
Error tolerance $\epsilon$ & Corresponding rank $\mat{r}$ & Actual error & Rank-$\mat{r}$ STHOSVD error* \\
\hline
$0.25$ & $(3,10,1)$ & $0.1995$ & $0.1995$ \\   
$0.2$ & $(10,23,1)$ & $0.1799$ & $0.1796$ \\		
$0.15$ & $(22,51,5)$ & $0.1421$ & $0.1403$ \\		
$0.1$ & $(32,114,8)$ & $0.0965$ & $0.0946$ \\		
$0.05$ & $(38,237,10)$ & $0.0400$ & $0.0381$ \\		
$0.01$ & $(40,381,10)$ & $0.0057$ & $0.0055$
\end{tabular}
\caption{A comparison of the adaptive R-STHOSVD algorithm \cref{alg:adapt_sthosvd} to the STHOSVD. We first obtained the rank of the core tensor with the requested relative error tolerance from the adaptive algorithm. Then we compared the actual error of the approximation from the adaptive R-STHOSVD to that of an STHOSVD with the same rank.  The processing order for all runs was $\rho = [2,1,3]$.  *Each STHOSVD was computed with the corresponding rank found in the second column.}
\label{tab:faces_adapt}
\end{table}

\subsubsection{Algorithms for Sparse Tensors}
We now test our algorithms on sparse tensors.  First consider the synthetic sparse tensor $\ten{X}$ defined in \cref{eqn:sparse}.  For three different $\gamma$ values $\gamma = 2,10,200$, we compare the SP-STHOSVD algorithm to the STHOSVD and R-STHOSVD algorithms by plotting the relative error as the target rank $(r,r,r)$ increases.  Note that we are only comparing to the sequential algorithms in \cref{fig:synsparse_err}. This is because we have already shown the HOSVD and R-HOSVD algorithms to have a similar error with a higher cost in a problem of this size.  As inputs to our test algorithms, we used oversampling parameter $p=5$ and processing order $\rho = [1,2,3]$. We can see that the error for the sparse algorithm is slightly higher for the lower $r$ values, which is better than the expected result given the error bound in \cref{thm:pass_error}. 

\begin{figure}[!ht]
    \centering
    \includegraphics[width = \textwidth]{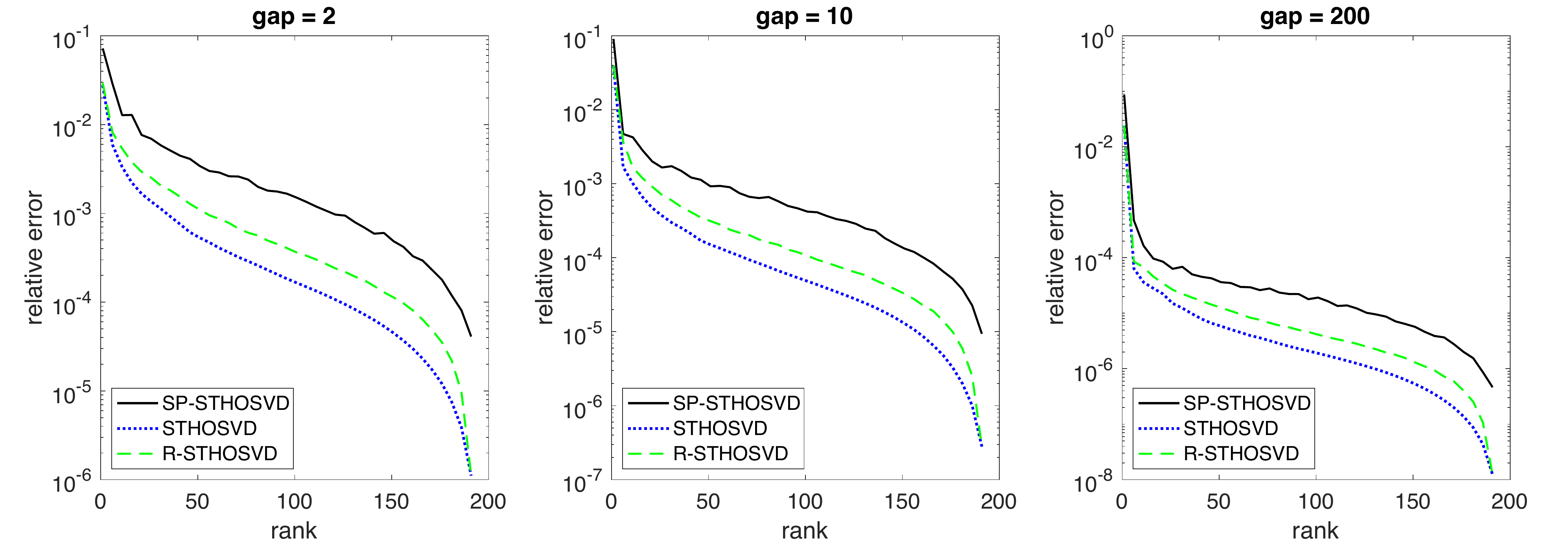}
    \caption{Relative error for synthetic sparse tensor $\ten{X}$ defined in \cref{eqn:sparse} with $\gamma = 2,10,200$ as the target rank $(r,r,r)$ increases. We compare the SP-STHOSVD algorithm (\cref{alg:sparse}) to the STHOSVD and R-STHOSVD algorithms with inputs of oversampling parameter $p = 5$ and processing order $\rho = [1,2,3]$.}
    \label{fig:synsparse_err}
\end{figure}

We now test our SP-STHOSVD algorithm on the real-world sparse tensors. Note that this algorithm oversamples but does not truncate, meaning that the rank of the resulting approximation given target rank $(r,r,r)$ will be $(r+p,r+p,r+p)$.  Then to compare the approximations of SP-STHOSVD to any of the other fixed rank algorithms presented thus far, we must use $(r+p,r+p,r+p)$ as the target rank for the fixed rank algorithms. 

To compare our algorithms, we ran the SP-STHOSVD and the R-STHOSVD algorithms on the condensed NELL-2 tensor (details in \cref{tab:frostt}) with inputs oversampling parameter $p = 5$, and processing order $\rho = [3,1,2]$ as the target rank $(r,r,r)$ increased.  The relative errors obtained are shown in \cref{tab:frostt_err}.  We can see that the error for SP-STHOSVD is higher than that of the R-STHOSVD, but this was anticipated from the theory.  We also compare the runtime of these two algorithms averaged over three runs to see their respective costs, which are also shown in \cref{tab:frostt_err}. 

\begin{table}[!ht]
    \centering
    \begin{tabular}{c|c|c|c|c}
        \hline
        \multicolumn{1}{c}{} & \multicolumn{3}{c}{NELL-2} & \multicolumn{1}{c}{} \\
        \hline
        \multicolumn{1}{c}{}  &   \multicolumn{2}{c}{Relative Error} & \multicolumn{2}{c}{Runtime in seconds}\\
        Target Rank  & SP-STHOSVD  & R-STHOSVD & SP-STHOSVD  & R-STHOSVD\\
        \hline
        $30$ & $0.2968$  & $0.1319$ & $0.5690$ & $17.0642$ \\
        $60$ & $0.2282$  & $0.0914$ &$0.9606$ & $18.9203$\\
        $90$ &  $0.1950$ & $0.0699$ & $1.5889$ & $23.3303$\\
        $120$ & $0.1666$ & $0.0573$ & $2.0706$ & $28.9399$\\
        $150$ & $0.1431$ & $0.0478$ & $2.1310$ & $ 33.6867$\\
        $180$ & $0.1201$ & $0.0417$ &  $2.3678$ & $39.0644$\\
        $210$  & $0.1181$ & $0.0367$ & $3.0832$ & $45.5227$\\
        $240$ & $0.1095$ &$0.0326$  & $3.7282$ & $52.4856$
    \end{tabular}
    \vspace{.4cm}
    
    \begin{tabular}{c|c|c|c|c}
        \hline
        \multicolumn{1}{c}{} & \multicolumn{3}{c}{Enron} & \multicolumn{1}{c}{} \\
        \hline
        \multicolumn{1}{c}{}  &   \multicolumn{2}{c}{Relative Error} & \multicolumn{2}{c}{Runtime in seconds}\\
        Target Rank  & SP-STHOSVD  & R-STHOSVD & SP-STHOSVD  & R-STHOSVD\\
        \hline
        $20$ & $0.6015$  & $0.2081$ & $0.4086$ & $31.5615$ \\
        $45$ & $0.3854$  & $0.1259$ &$0.7965$ & $34.5802$\\
        $70$ &  $0.3548$ & $0.0870$ & $1.3276$ & $36.6431$\\
        $95$ & $0.2038$ & $0.0632$ & $2.3465$ & $39.3095$\\
        $120$ & $0.1503$ & $0.0458$ & $2.8175$ & $ 39.7169$\\
        $145$ & $0.0976$ & $0.0332$ &  $3.5659$ & $42.0969$\\
        $170$  & $0.0756$ & $0.0239$ & $6.2158$ & $45.8429$\\
        $195$ & $0.0578$ &$0.0180$  & $6.8285$ & $50.2907$
    \end{tabular}
    \caption{The relative error and runtime of both SP-STHOSVD and R-STHOSVD on both the condensed and subsampled Enron dataset and the condensed NELL-2 dataset as the target rank $(r,r,r)$ increases.  The processing order was $\rho = [3,1,2]$, and the oversampling parameter was $p=5$. Note that the rank is the same for each mode for simplicity, and that the input rank for the R-STHOSVD was $(r+p,r+p,r+p)$ so the approximations have the same size.}
    \label{tab:frostt_err}
\end{table}

For the Enron dataset \cite{shetty2004enron}, we repeat the previous experiment, comparing the relative error and runtime of the SR-STHOSVD and R-STHOSVD as the target rank increases. The results are in \cref{tab:frostt_err}.  We see similar results as before, in that the error for SP-STHOSVD is higher than that of the R-STHOSVD, but the runtime is far less. 

\section{Conclusion}
In this paper, we makes several contributions in terms of new randomized algorithms and analysis for low-rank tensor decomposition in the Tucker format. Specifically, we proposed adaptive algorithms for problems where the target rank is not known beforehand and an algorithm that preserves the structure of the original tensor. We also provided probabilistic analysis of randomized compression algorithms, R-HOSVD and R-STHOSVD, as well as analysis for the newly proposed algorithms. We showed, through the analysis and numerical examples that using randomized techniques still allows for accurate approximations to tensors, and that the approximation error is comparable to deterministic algorithms, with much lower computational costs. 

\section{Acknowledgements} The authors would like to thank Ilse Ipsen for reading through the paper and giving useful feedback.
\bibliographystyle{abbrv}
\bibliography{ref}
\end{document}


\maketitle

\section{A detailed example}

Here we include some equations and theorem-like environments to show
how these are labeled in a supplement and can be referenced from the
main text.
Consider the following equation:
\begin{equation}
  \label{eq:suppa}
  a^2 + b^2 = c^2.
\end{equation}
You can also reference equations such as \cref{eq:matrices,eq:bb} 
from the main article in this supplement.

\lipsum[100-101]

\begin{theorem}
  An example theorem.
\end{theorem}

\lipsum[102]
 
\begin{lemma}
  An example lemma.
\end{lemma}

\lipsum[103-105]

Here is an example citation: \cite{KoMa14}.

\section[Proof of Thm]{Proof of \cref{thm:bigthm}}
\label{sec:proof}

\lipsum[106-112]

\section{Additional experimental results}
\Cref{tab:foo} shows additional
supporting evidence. 

\begin{table}[htbp]
{\footnotesize
  \caption{Example table}  \label{tab:foo}
\begin{center}
  \begin{tabular}{|c|c|c|} \hline
   Species & \bf Mean & \bf Std.~Dev. \\ \hline
    1 & 3.4 & 1.2 \\
    2 & 5.4 & 0.6 \\ \hline
  \end{tabular}
\end{center}
}
\end{table}

\bibliographystyle{siamplain}
\bibliography{references}